\def\modulo{\text{ \rm mod }}
\renewcommand{\Re}{\operatorname{Re}}
\renewcommand{\Im}{\operatorname{Im}}
\def\R{\ensuremath\mathbb{R}}
\def\C{\ensuremath\mathbb{C}}
\def\Z{\ensuremath\mathbb{Z}}
\def\Q{\ensuremath\mathbb{Q}}
\def\N{\ensuremath\mathbb{N}}
\def\H{\ensuremath\mathbb{H}}
\def\F{\ensuremath\mathbb{F}}
\def\K{\ensuremath\mathbb{K}}
\newtheorem{thm}{Theorem}[section]
\newtheorem{defi}[thm]{Definition}
\newtheorem{cor}[thm]{Corollary}
\newtheorem{lemma}[thm]{Lemma}
\newtheorem{prop}[thm]{Proposition}
\theoremstyle{remark}
\newtheorem{remark}[thm]{Remark}
\def\eps{\ensuremath\varepsilon}
\def\0{\emptyset}
\def\SL{\mathrm{SL}}
\def\Hom{\mathrm{Hom}}
\def\SO{\mathrm{SO}}
\def\GL{\mathrm{GL}}
\def\vol{\mathrm{vol}}
\numberwithin{equation}{section}
\newcommand{\inprod}[2]{\left \langle  {#1} , {#2} \right \rangle}
\newcommand{\gag}{\Gamma_{\infty}' \backslash \Gamma / \Gamma_{\infty}'}
\newcommand{\lr}[1]{\left (   {#1} \right )}
\newcommand{\Ltwo}{L^2(\Gamma \backslash \H^{n+1}, \chi)}
\def\a{\ensuremath\mathfrak{a}}
\def\sa{\ensuremath{\sigma_{\mathfrak{a}}}}
\def\Gi{\ensuremath{\Gamma_{\infty}'}}
\def\GH{\ensuremath{\Gamma \backslash \H^{n+1}}}
\definecolor{dark-red}{rgb}{0.4,0.15,0.15}
\definecolor{dark-blue}{rgb}{0.15,0.15,0.4}
\definecolor{medium-blue}{rgb}{0,0,0.5}
\begin{document}
\title[Residual distribution of modular symbols]{Residual equidistribution of modular symbols and cohomology classes for quotients of hyperbolic $n$-space}
\author{Petru Constantinescu}

\address{Department of Mathematics, University College London,
	25 Gordon Street, London, UK, WC1H  0AY}

\email{\href{mailto:petru.constantinescu.17@ucl.ac.uk}{petru.constantinescu.17@ucl.ac.uk}}

\author{Asbj\o{}rn Christian Nordentoft}

\address{Mathematical Institute of the University of Bonn, Endenicher Allee 60, Bonn 53115, Germany }

\email{\href{mailto:acnordentoft@outlook.com}{acnordentoft@outlook.com}}

\subjclass[2020]{Primary 11F67, Secondary 11M36, 11E45, 11E88}
\date{\today}
\begin{abstract}
We provide a new and simple automorphic method using Eisenstein series to study the equidistribution of modular symbols modulo primes, which we apply to prove an average version of a conjecture of Mazur and Rubin. More precisely, we prove that modular symbols corresponding to a Hecke basis of weight 2 cusp forms are asymptotically jointly equidistributed mod $p$ while we allow restrictions on the location of the cusps. As an application, we obtain a residual equidistribution result for Dedekind sums. Furthermore, we calculate the variance of the distribution and show a surprising bias with connections to perturbation theory. Additionally, we prove the full conjecture in some particular cases using a connection to Eisenstein congruences. Finally, our methods generalise to equidistribution results for cohomology classes of finite volume quotients of $n$-dimensional hyperbolic space.

\end{abstract}

\maketitle
\section{Introduction}
Modular symbols are certain periods of weight 2 cusp forms introduced by Birch and Manin and they are an indispensable tool for studying (twisted) $L$-functions of holomorphic cusp forms \cite{Man72}, \cite{Mazur79} and for computing modular forms \cite{Cremona97}. Modular symbols define elements of certain cohomology groups and the results of this paper thus fit into a bigger picture of the study of (co)homology of arithmetic groups, which has received a lot of attention recently \cite{BergeronVenkatesh13}, \cite{CalegariVenk19} due to their deep connections with number theory coming from \cite{Scholze15}.

Recently, Mazur and Rubin initiated the study of the arithmetic distribution of modular symbols and put forward a number of conjectures \cite{MaRu19}, which have received a lot of attention, see the work of Petridis--Risager\cite{PeRi}, Bettin--Drappeau \cite{BeDr19}, Blomer et al. \cite[Chapter 9]{BlFoKoMiMiSa}, Diamantis et al. \cite{DiHoKiLe20}, Lee--Sun \cite{LeeSun19}, Sun\cite{Sun20}, Nordentoft \cite{No19}, Constantinescu \cite{petru}. One of these conjectures (see  \cite{MaRu}) predicts that (normalised) modular symbols should equidistribute among the residue classes modulo $p$. Recently, an average version of this conjecture was settled by Lee and Sun \cite[Theorem I]{LeeSun19} using dynamical methods. In this paper we introduce a new automorphic method for studying the mod $p$ distribution of modular symbols, which also applies to more general cohomology classes. As is the case in \cite{LeeSun19}, we obtain an average version of the mod $p$ conjecture of Mazur and Rubin (and its generalisations), but with further refinements. Using different arguments, we can actually prove the full conjecture in some special cases (specific $p$ and specific cusp forms), see Section \ref{special case}.

Our automorphic methods enable us to deal with a much more general setup compared to the work of Lee and Sun and thus we obtain a number of new results:
\begin{enumerate}
    \item Firstly, we obtain {\it joint} equidistribution for the mod $p$ values of modular symbols (appropriately normalised) associated to a Hecke basis of weight 2 cusp forms restricted to cusps which lie in a {\it fixed} interval of $\R/\Z$.
    \item We calculate the variance of the distribution and show a surprising bias for large $p$.
    \item We show some particular cases of the full conjecture using connections with Eisenstein congruences.
    
    \item As an application of our method, we obtain a residual equidistribution result for Dedekind sums.
    
    \item Lastly, we extend the equidistribution results to classes in the cohomology of general finite volume quotients of higher dimensional hyperbolic spaces.
    
\end{enumerate}  
We note that in the case of higher dimensional hyperbolic spaces there is interesting torsion in the cohomology. The breakthrough of Scholze \cite{Scholze15} established that such torsion classes have associated Galois representations. This was actually our original motivation for studying the higher dimensional cases. Furthermore, Bergeron and Venkatesh \cite{BergeronVenkatesh13} have conjectured that, at least in the three dimensional case, there is an abundance of torsion in the relevant cohomology group. In this paper we are able to shed light on the distribution properties of these cohomology classes. In Appendix \ref{cohomology} we will survey what is known about the dimensions of the cohomology groups, which our results apply to.

\subsection{Results for modular symbols} 
Let us state the result in the simplest case for the two dimensional hyperbolic space in an arithmetic setup. We define the {\it modular symbol map} associated to a weight 2 and level $N$ cusp form $f\in \mathcal{S}_2(\Gamma_0(N))$ as the map 
\begin{equation}\label{modularsymbol} \Q\ni r \mapsto \langle r, f\rangle:=2\pi i \int_{r}^{i\infty} f(z)dz, \end{equation}
where the contour integral is taken along a vertical line. One way to think about this map is as the Poincar\'{e} pairing on $\Gamma_0(N)\backslash \H^2$ between the 1-form $2\pi i f(z)dz$ and the homology class of paths containing the geodesic from $r$ to $i\infty$.  
Now assume that $f$ is a Hecke-normalised eigenform. Then by  \cite[Sec. 1]{MaRu19}, there exist periods $\Omega_{f,+}$ and $\Omega_{f,-}$ such that for all $a/q\in \Q$ with $N|q$, we have $\mathfrak{m}^{\pm}_{f}(a/q)\in \Z$ with full image, where 
\begin{align}\label{oddevenMS} \mathfrak{m}^{\pm}_{f}(a/q):=\frac{1}{\Omega_{f,\pm}}\left(\langle a/q, f\rangle \pm \langle -a/q, f\rangle\right). \end{align}
 Given a basis of Hecke eigenforms $f_1,\ldots, f_d$ and a prime $p$, we can consider the map
$$r\mapsto \mathfrak{m}_{N,p}(r):=(\mathfrak{m}^+_{f_1}(r),\mathfrak{m}^-_{f_1}(r),\ldots ,\mathfrak{m}^-_{f_d}(r), r)\in(\Z/p\Z)^{2d}\times (\R/\Z) $$
as a random variable defined on the outcome space 
\begin{equation} \label{Omega}\Omega_{Q,N}:= \{a/q\mid 0< a<q\leq Q, (a,q)=1, N|q\}\end{equation}
endowed with the uniform probability measure. Then we have the following equidistribution result.
\begin{thm}
\label{mainthm1}
The random variables $\mathfrak{m}_{N,p}$ defined on the outcome spaces $\Omega_{Q,N}$ converge in distribution to the uniform distribution on $(\Z/p\Z)^{2d}\times (\R/\Z)$ as $Q\rightarrow \infty$. More precisely, for any fixed $\mathbf{a} \in (\Z / p \Z)^{2d}$ and any interval $I\subset \R/\Z$, we have
\begin{equation*}
\frac{ \# \left \{ a/q \in \Omega_{Q,N}\cap I  \mid  (\mathfrak{m}^+_{f_1}(a/q),\ldots ,\mathfrak{m}^-_{f_d}(a/q))  \equiv \mathbf{a} \modulo p \right \}}{\# \Omega_{Q,N}} =  \frac{|I|}{p^{2d}}+ o(1)
\end{equation*}
 as $Q \to \infty$. 
\end{thm}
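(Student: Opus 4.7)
The plan is to apply Weyl's equidistribution criterion on the compact abelian group $G := (\Z/p\Z)^{2d} \times \R/\Z$. Approximating $\mathbf{1}_I$ by trigonometric polynomials, the statement reduces to showing that for every non-trivial continuous character
\[
\psi(\mathbf{x}, r) = e^{2\pi i \langle \mathbf{n}, \mathbf{x}\rangle/p}\,e^{2\pi i k r}, \qquad (\mathbf{n}, k) \neq (\mathbf{0}, 0),
\]
the Weyl sum $S_\psi(Q) := \sum_{a/q \in \Omega_{Q,N}} \psi(\mathfrak{m}_{N,p}(a/q))$ is $o(|\Omega_{Q,N}|) = o(Q^2)$ as $Q \to \infty$.

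The algebraic input is that each $\gamma \mapsto \mathfrak{m}^\pm_{f_j,p}(\gamma\cdot\infty)$ is an additive homomorphism $\Gamma_0(N) \to \Z/p\Z$, by the cocycle identity $\langle \gamma_1\gamma_2\infty, f\rangle = \langle \gamma_1\infty, f\rangle + \langle \gamma_2\infty, f\rangle$ for weight $2$ forms (which follows from $\Gamma_0(N)$-invariance of $f(z)\,dz$), combined with the $p$-integral normalisation. Hence
\[
\chi_\mathbf{n}(\gamma) := \exp\!\left(\tfrac{2\pi i}{p}\sum_{j=1}^d\bigl(n_j^+\mathfrak{m}^+_{f_j,p}(\gamma\infty) + n_j^-\mathfrak{m}^-_{f_j,p}(\gamma\infty)\bigr)\right)
\]
is a unitary character of $\Gamma_0(N)$, non-trivial whenever $\mathbf{n}\not\equiv\mathbf{0}\pmod p$, using that modular symbols attached to distinct Hecke newforms are $\Q$-linearly independent in parabolic cohomology (Eichler-Shimura).

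Next I would introduce the twisted Eisenstein series
\[
E(z, s, \chi_\mathbf{n}) = \sum_{\gamma \in \Gi\backslash\Gamma_0(N)} \chi_\mathbf{n}(\gamma)\,\Im(\gamma z)^s,
\]
which by Selberg's theory admits meromorphic continuation and is holomorphic on $\Re(s) \geq 1$ for non-trivial $\chi_\mathbf{n}$. Parametrising cosets by their bottom row $(q, d)$ with $N\mid q$, $(d,q)=1$, and writing $\gamma\infty = a/q$ with $ad\equiv 1\pmod q$, the $k$-th Fourier coefficient of $E(\cdot, s, \chi_\mathbf{n})$ at the cusp $\infty$ is a Dirichlet series of shape
\[
L_k(s,\chi_\mathbf{n}) = \sum_{\substack{q>0\\ N\mid q}} q^{-2s}\sum_{\substack{a\bmod q\\ (a,q)=1}} \chi_\mathbf{n}(\gamma_{a/q})\,e^{2\pi i k a/q},
\]
whose Dirichlet coefficients are precisely the contributions to $S_\psi(Q)$ stratified by $q$. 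A standard Perron contour shift, using polynomial growth of $E(\cdot, s, \chi_\mathbf{n})$ in the strip via Phragmén-Lindelöf, then converts the absence of a pole at $s = 1$ into $S_\psi(Q) = o(Q^2)$. The residual case $\mathbf{n} = \mathbf{0}$, $k \neq 0$ reduces to the classical equidistribution of Farey fractions with $N\mid q$, handled identically by the untwisted Eisenstein series.

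The main obstacle is guaranteeing non-triviality of $\chi_\mathbf{n}$ for every $\mathbf{n}\not\equiv \mathbf{0} \pmod p$: Eichler-Shimura gives $\Q$-linear independence of the periods $\mathfrak{m}^\pm_{f_j}$, but the corresponding mod $p$ reductions can collapse at primes of Eisenstein congruence --- the very phenomenon leveraged in the unconditional cases of Section \ref{special case}. A secondary technical point is that the analytic input must supply genuine $o(1)$ cancellation uniformly across the finite family $\{\chi_\mathbf{n}\}_\mathbf{n}$; uniformity is automatic since there are only $p^{2d}$ characters. Once non-triviality is in hand, the residual equidistribution drops out of the meromorphic continuation in essentially one step, matching the authors' preamble remark that it is "an almost direct consequence" of this continuation.
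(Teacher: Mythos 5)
Your analytic strategy matches the paper's: reduce to Weyl sums via Weyl's criterion, recognize the Weyl sums as Dirichlet coefficients of Fourier coefficients of a twisted Eisenstein series, and convert absence of a pole at $s=1$ (via meromorphic continuation plus vertical-line bounds and a Perron contour shift) into $o(Q^2)$ cancellation. The paper actually derives Theorem~\ref{mainthm1} as a corollary of the general Theorem~\ref{mainthmHn}, using the Vahlen-model machinery, but for $n=1$ the content is the same; your invocation of Phragm\'en--Lindel\"of for the vertical-line bounds is a bit vague where the paper uses a Colin de Verdi\`ere resolvent argument (Proposition~\ref{vertical lines}), though both routes can be made to work.

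The genuine gap is the one you name yourself in the final paragraph -- and it is not a minor technicality but the central algebraic input. You invoke classical Eichler--Shimura to get $\Q$-linear independence of the $\mathfrak{m}^\pm_{f_j}$ in parabolic cohomology, then observe that this does not imply $\F_p$-linear independence of the normalised residues $\mathfrak{m}^\pm_{f_j,p}$. As written, the proposal therefore does not establish non-triviality of $\chi_{\mathbf{n}}$ for all $\mathbf{n}\not\equiv\mathbf{0}\pmod p$, which is precisely what you need to run Weyl's criterion. The paper closes this gap by appealing to a \emph{mod $p$} Eichler--Shimura isomorphism (see equation~\eqref{modpES}, citing \cite[(3.5)]{LeeSun19}), which asserts that $f\mapsto\mathfrak{m}^\pm_{f,p}$ induces an isomorphism
$H_P^1(\Gamma_0(N),\F_p)\cong \mathcal{S}_2(\Gamma_0(N))_{\F_p}\oplus\mathcal{S}_2(\Gamma_0(N))_{\F_p}$,
so that the $\mathfrak{m}^\pm_{f_j,p}$ are by construction an $\F_p$-basis and hence in general position. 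Your worry about Eisenstein congruences is a red herring for the non-triviality question: those congruences show that a specific $\mathfrak{m}^+_{f,p}$ coincides with a Dirichlet-character class $\sigma_\chi$, which is an \emph{identification with a transparent class}, not a degeneration to zero or a collision between distinct eigenforms. Replace your characteristic-zero Eichler--Shimura citation with the mod $p$ version and the proof is complete.
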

\begin{remark}
Similarly, we can prove equidistribution modulo $p^n$ (see Theorem \ref{mainthmHn} below). This translates to the fact that the random variables $(\mathfrak{m}^+_{f_1},\ldots ,\mathfrak{m}^-_{f_d})$ considered as maps $\Omega_{Q,N}\cap I\rightarrow\Q_p$ are asymptotically distributed with respect to the (multivariate) standard $p$-adic Gau{\ss}ian (as defined in for instance \cite{Zelenov19}).  \end{remark}

The next natural question is to ask how well the values equidistribute. We answer this by studying the {\lq\lq}variance{\rq\rq} of the residual distribution modulo $p$ of the random variables $\mathfrak{m}_f^{\pm}$ on the sample space $\Omega_{Q,N}$. Furthermore, we show an analogue of Chebyshev's bias for large $p$, in the sense that the modular symbols are {\lq\lq}biased{\rq\rq} towards the residue class $0$ mod $p$. 

\begin{thm}
\label{variancethm}
For large enough $p$, there exist constants $c_p, \delta_p$>0 such that
\begin{equation*}
    \sum_{a \in \Z/ p\Z}\left ( \frac{ \# \{ b/q  \in \Omega_{Q,N} \mid \mathfrak{m}_{f}^\pm(b/q) \equiv a \modulo p \}}{\#{\Omega_{Q,N}}}  - \frac{1}{p}  \right )^2 \sim c_p Q^{-\delta_p}
\end{equation*}
as $Q \to \infty$. Moreover, as $p \to \infty$, we have that 
$ c_p=2/p+O(p^{-2})$ and $\delta_p\rightarrow 0$.

Furthermore, for $p$ large enough, we have for $Q$ large enough (depending on $p$) that:
\begin{align} \label{bias2}
\#\{ b/q \in \Omega_{Q,N}\mid \mathfrak{m}^\pm_f(b/q)\equiv a\modulo p\}\leq \#\{ b/q \in \Omega_{Q,N}\mid \mathfrak{m}^\pm_f(b/q)\equiv 0\modulo p\},  \end{align}
with equality exactly if $a\equiv 0\mod p$.
\end{thm}


\begin{remark}
We explicitly evaluate the constants $c_p$ and $\delta_p$ and moreover we obtain asymptotics for the deviation from the mean for different residue classes when $p$ is large, see Section \ref{variance section} for more details.
\end{remark}

We can also show that some specific cases of the conjecture of Mazur and Rubin hold, that is without taking an extra average. We state here the result in the simplest case and refer to Section \ref{special case} for the more general case.
\begin{thm}
\label{special case thm}
Let $f\in \mathcal{S}_2(\Gamma_0(11))$ be the unique cusp form of weight 2 and level $11$. Then the values of $\mathfrak{m}^+_{f}$ on $\{\frac{a}{q}\mid (a,q)=1, 0<a<q\}$ equidistribute exactly modulo $5$ for all $q\equiv 0\modulo 11$.
\end{thm}


As a consequence of the method developed to study the special case as in Theorem \ref{special case thm}, we deduce a residual equidistribution result for classical Dedekind sums, given by $$s(a,q):=\sum_{k=1}^q (\!(k/q)\!)(\!(ak/q)\!)$$ with $(\!()\!)$ the \emph{sawtooth function}. We allow for both an {\lq\lq}algebraic{\rq\rq} and {\lq\lq}archimedian{\rq\rq} restriction on $(a,q)$. Our result supplements the vast literature on the archimedean distributional properties of Dedekind sums, see \cite{Girstmair18}, \cite{Bruggeman94} for surveys of results.

\begin{cor}\label{dedekind}
Let $N,p\geq 5$ be primes such that $p| N-1$ and $H\leq (\Z/N\Z)^\times$ the unique subgroup of index $p$. Fix some class $a_0\in (\Z/N\Z)^\times$ and some interval $I\subset \R/\Z$. Then the values of 
$$s(a, Nq)-s(a,q)-\frac{(N-1)(a+\overline{a})}{12 q}$$
(where $\overline{a}a\equiv 1\modulo Nq$) on  the outcome space
$$\left\{ (a,q) \mid 0<q\leq Q,a\in (\Z/Nq\Z)^\times , a\in a_0 H, a/q\in I \right\}$$
are all $p$-integral and equidistribute mod $p$ as $Q\rightarrow \infty$. 
\end{cor}

We observe that the modular symbols map gives rise to a map $\Gamma_0(N)\rightarrow \C$ by putting $\langle\gamma, f \rangle:= \langle\gamma\infty, f \rangle$, where $\gamma\infty=a/c$ with $a,c$ the left upper and lower entries of $\gamma\in \Gamma_0(N)$. By shifting the contour and doing a change of variable we see that
\begin{align*}
\langle\gamma_1\gamma_2, f \rangle=\langle\gamma_1, f \rangle+2\pi i\int_{\gamma_1\infty}^{\gamma_1\gamma_2\infty}f(z)dz=\langle\gamma_1, f \rangle+\langle\gamma_2, f \rangle,\end{align*}
which shows that modular symbols define an additive character on $\Gamma_0(N)$ and thus an element of (the cuspidal part of) the cohomology group $H^1(\Gamma_0(N), \C)$. Furthermore, by the integrality conditions, we see that the normalised modular symbols $\mathfrak{m}_{f,p}^\pm$ define elements of $H^1(\Gamma_0(N), \Z/p\Z)$. This view point is useful for generalisations.
\begin{remark}
We note that in \cite{LeeSun19}, the slightly larger outcome space $\{a/q\mid 0<a<q\leq Q, (a,q)=1\}$ is considered (following Mazur and Rubin), that is, without the condition that $N|q$. In fact, equidistribution on this outcome space does {\it not} hold in the generality above. One has to exclude some bad primes $p$ (see Remark \ref{strictlyspeaking} below). Our methods can also deal with this larger outcome space, by considering the Fourier expansion of Eisenstein series at different cusps, as is done in \cite{PeRi} or \cite{petru}. The outcome space $\Omega_{Q,N}$ above is, however, very natural from the cohomological perspective and for simplicity we will restrict to this case.  
\end{remark}
\subsection{Distribution of cohomology classes} More generally, let  $\SO(n+1,1)$ be the special orthogonal group with signature $(n+1,1)$, which we identity with the group of isometries of the $(n+1)$-dimensional upper half space $\H^{n+1}.$ Now, for a co-finite subgroup with cusps $\Gamma< \SO(n+1,1)$, we will study the distribution of unitary characters of $\Gamma$ or, equivalently, cohomology classes in $H^1(\Gamma, \R/\Z)$. These cohomology groups have been studied in many contexts (\cite{Sarnak90},\cite[Chap. 7]{ElGrMe98}) and especially the case $n=2$ is very appealing as it corresponds to Kleinian groups due to the exceptional isomorphism $\SO(3,1)\cong \SL_2(\C)$.
\subsubsection{Results with arithmetic ordering}
Let $\Gamma\subset \SO(n+1,1)$ be as above and assume that the associated symmetric space $\Gamma\backslash \H^{n+1}$ has a cusp at $\infty$. Let $\Gamma_\infty' \subset \Gamma$ be the parabolic subgroup fixing the cusp at $\infty$. Note that since $\Gamma$ is discrete, there exists a lattice $\Lambda < \R^n$ such that $\Gamma_{\infty}'$ is exactly the group of motions corresponding to translations by $\Lambda$. We will study the distribution of unitary characters trivial on $\Gamma_\infty'$ or, equivalently, elements of the cohomology group $H^1_{\Gi}(\Gamma, \R/\Z)$.

Our distribution results are with respect to a natural arithmetic ordering on $\gag$ which generalises the ordering in the definition of $\Omega_{Q,N}$ above. To define this, we use the {\it Vahlen model} $\mathrm{SV}_{n-1}$  for the group of isometries of $\H^{n+1}$
 consisting of $2 \times 2$ matrices over a specific Clifford algebra, introduced in \cite{Ah86} (see Section \ref{vahlen} below for a detailed construction). This model provides a natural generalisation to $n>2$ of the familiar models $\mathrm{SV}_0=\SL_2(\R)$ and $\mathrm{SV}_1=\SL_2(\C)$. 
 We define the following outcome space:
\begin{equation}\label{outcomespace}
    T_{\Gamma}(X) = \left \{\gamma\in \gag  \mid  0 < |c_\gamma| < X \right \},
\end{equation}
where $\gamma=\begin{psmallmatrix} a_\gamma & b_\gamma \\ c_\gamma& d_\gamma \end{psmallmatrix}\in \mathrm{SV}_{n-1}$ in the Vahlen group model and $|\!\cdot\!|$ denotes the norm on the relevant Clifford algebra. This generalizes the outcome space (\ref{Omega}) above and the ones considered for $n=1$ in \cite{PeRi}, \cite{No19} and for $n=2$ in \cite{petru}.

Now let $\omega_1,\ldots, \omega_d$ be elements of $H^1_{\Gi}(\Gamma, \R/\Z)$ in {\it general position}, meaning that for any $(n_1,\ldots, n_d)\in \Z^d$, we have 
$$ n_1 \omega_1+\ldots +n_d \omega_d=0\in H^1_{\Gi}(\Gamma, \R/\Z) \Leftrightarrow \left(n_i\omega_i=0\in H^1_{\Gi}(\Gamma, \R/\Z),\forall i=1,\ldots, d\right).$$
As an example one can pick $\omega_1, \dots, \omega_d$ to be a $\F_p$-basis for $H^1_{\Gi}(\Gamma, \Z/p\Z)$. We notice that the image of any $\omega\in H^1(\Gamma, \R/\Z)$ is either dense in $\R/\Z$ or finite (recall that $\omega$ defines an additive character $\Gamma\rightarrow \R/\Z$). In the first case we put $J_\omega=\R/\Z$ and in the latter case we put $J_\omega=\Z/m\Z$, where $m$ is the cardinality of the image of $\omega$. We equip $\R/\Z$ and $\Z/m\Z$ with the obvious choices of probability measures, Lebesque and uniform respectively. Finally associated to $\gamma\in \gag$, we define the invariant $\gamma \infty \in (\R^n\cup \{ \infty \})/\Lambda $ using the action of $\SO(n+1,1)$ on the boundary of $\H^{n+1}$, see Section \ref{cofinite groups} for more details. Then we have the following distribution result.


\begin{thm} \label{mainthmHn} Let $\omega_1,\ldots, \omega_d\in H^1_{\Gi}(\Gamma, \R/\Z)$ be in general position. The random variables $\gamma\mapsto (\omega_1(\gamma),\ldots, \omega_d(\gamma), \gamma \infty)$ defined on the outcome spaces $T_{\Gamma}(X)$ are asymptotically uniformly distributed on $\prod_{i=1}^{d} J_{\omega_i} \times (\R^n/\Lambda)$ as $X\rightarrow \infty$. More precisely, for any fixed (continuity) subsets $A_i\subset J_{\omega_i}$ and $B \subset \R^n / \Lambda$, we have
\begin{equation*}
\frac{ \# \left \{ \gamma  \in T_{\Gamma}(X) \mid (\omega_1(\gamma),\ldots, \omega_d(\gamma)) \in \prod_{i=1}^d A_i, \gamma \infty \in B \right\}}{\# T_{\Gamma}(X)} =  \prod_{i=1}^d \frac{|A_i|}{|J_{\omega_i}|} \cdot  \frac{|B|}{\vol(\R^n/\Lambda)}+ o(1)
\end{equation*}
 as $X \to \infty$. 
\end{thm}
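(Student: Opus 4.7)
The plan is to verify Weyl's equidistribution criterion on $G := \prod_{i=1}^{d} J_i \times (\R^n/\Lambda)$ and then upgrade from continuous test functions to indicator functions of continuity sets by standard approximation. It suffices to prove that for every non-trivial character $\psi$ of $G$,
\[ \frac{1}{\#T_\Gamma(X)} \sum_{\gamma \in T_\Gamma(X)} \psi(\omega(\gamma)) \longrightarrow 0 \text{ as } X\to\infty. \]
Such a $\psi$ is specified by a tuple $(k_1,\ldots,k_d,m)$ with $k_i$ in the Pontryagin dual of $J_i$ and $m\in\Lambda^*$. The homomorphism $\chi_0 := \sum_i k_i\omega_i\colon\Gamma\to\R/\Z$ is trivial on $\Gi$, and by the general-position hypothesis represents a non-trivial class in $H^1_{\Gi}(\Gamma,\R/\Z)$ whenever $(k_1,\ldots,k_d)\neq 0$. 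The task therefore reduces to showing
\[ S(X) := \sum_{\substack{\gamma\in\gag\\ 0<|c_\gamma|<X}} e(\chi_0(\gamma))\,e(\langle m,\gamma\infty\rangle) \;=\; o\bigl(\#T_\Gamma(X)\bigr) \]
whenever $(\chi_0,m)\neq(1,0)$.

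Next I would identify $S(X)$ with a partial sum of the Dirichlet series arising from the Fourier expansion of the twisted Eisenstein series
\[ E_\infty(z,s,\chi_0) \;=\; \sum_{\gamma\in\Gi\backslash\Gamma} \chi_0(\gamma)\,y(\gamma z)^s, \qquad \Re s>n, \]
at the cusp $\infty$. Writing $z=x+yj$ with $x\in\R^n/\Lambda$ and expanding in Fourier series in $x$, a Bruhat-style decomposition together with the Fourier transform of $|cz+d|^{-2s}$ in the Vahlen/Clifford model produces, up to explicit gamma- and Bessel-type factors, the Dirichlet series
\[ a_m(s,\chi_0) \;=\; \sum_{\substack{\gamma\in\gag\\ c_\gamma\neq 0}} \chi_0(\gamma)\,e(\langle m,a_\gamma c_\gamma^{-1}\rangle)\,|c_\gamma|^{-2s}, \]
with the constant term giving $\delta_{\chi_0=1}y^s+\phi(s,\chi_0)y^{n-s}$ when $m=0$. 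Since $a_\gamma c_\gamma^{-1}$ represents $\gamma\infty\in\R^n/\Lambda$, the sum $S(X)$ is exactly the truncation to $|c_\gamma|<X$ of $a_m(s,\chi_0)$ (respectively of $\phi(s,\chi_0)$ when $m=0$).

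The final step is to invoke the meromorphic continuation of $E_\infty(z,s,\chi_0)$ past $\Re s=n$, which is the automorphic input flagged in the introductory remark and is available from the $\chi_0$-equivariant spectral theory on $\GH$ in the Vahlen setting. The decisive point is that the rightmost pole at $s=n$ occurs \emph{only} for the trivial character at $m=0$; for any non-trivial $(\chi_0,m)$ the relevant Dirichlet series is holomorphic at $s=n$. Feeding this into a Perron-style contour shift, together with the unitarity of $\chi_0$ and polynomial bounds on vertical strips, yields $\#T_\Gamma(X)\sim CX^{2n}$ together with $S(X)=o(X^{2n})$ in the non-trivial case. Approximating the indicator functions of continuity sets $\prod A_i\times B$ by trigonometric polynomials then converts this into the statement of the theorem.

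The principal obstacle is the analytic step: pushing the meromorphic continuation of the unitarily twisted Eisenstein series past $\Re s=n$ with adequate growth control on vertical strips, and isolating the absence of a pole at $s=n$ for non-trivial $\chi_0$. In contrast to the perturbative framework of \cite{PeRi2}, where one differentiates an analytic family in $\eps$, one has to work directly with the $\chi_0$-equivariant Laplacian in the Clifford/Vahlen framework; the saving feature is that the vanishing of the residue for non-trivial unitary $\chi_0$ is essentially equivalent to the non-existence of $\chi_0$-invariant $L^2$-constants. Once this input is secured, Weyl's criterion and the Tauberian extraction are routine.
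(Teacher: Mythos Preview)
Your proposal is correct and follows essentially the same route as the paper: reduce to Weyl's criterion on $\prod_i J_i\times(\R^n/\Lambda)$, identify the Weyl sums with partial sums of the Dirichlet series $L(s,\chi,\mu)$ appearing as Fourier coefficients of the twisted Eisenstein series $E(P,s,\chi)$, and then use meromorphic continuation past $\Re s=n$ together with the fact (your ``no $\chi_0$-invariant $L^2$-constants'' observation, which is exactly Lemma~\ref{triviallemma}) that the pole at $s=n$ occurs only for $(\chi,\mu)=(1,0)$, finishing with a smoothed Perron/contour-shift argument requiring polynomial bounds on vertical lines. The paper carries out precisely these steps, with the vertical-line bound supplied via Colin de Verdi\`ere's resolvent trick (Proposition~\ref{vertical lines}) and the contour shift implemented with a smooth cutoff $\phi_U$ (Proposition~\ref{main}); your outline anticipates all of this accurately.
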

\begin{remark}
The Vahlen model has been used before to study automorphic forms on $\H^{n+1}$, for example by Elstrodt, Grunewald, and Mennicke \cite{vahlen2} to prove a generalisation of the Selberg Conjecture regarding the first non-zero eigenvalue of the Laplacian and by  S\"{o}dergren \cite{So12} for proving equidistribution of horospheres on $\H^{n+1}$.
\end{remark}

\begin{remark}
 Notice that the number of choices of cohomology classes in $H^1_{\Gamma_\infty'}(\Gamma, \R/\Z)$ in general position is infinite unless $\Gamma/\langle [\Gamma,\Gamma], \Gamma'_\infty\rangle$ is torsion. See Appendix \ref{cohomology} for a survey of results on the size of $H^1_{\Gamma_\infty'}(\Gamma, \R/\Z)$. 
\end{remark}

The structure of the paper is as follows. In Section \ref{sketch} we give a sketch proof of Theorem \ref{mainthm1} in its simplest form and underline the main ideas, including the use of the analytic properties of the Eisenstein series. In Section \ref{special case}, we prove Theorem \ref{special case thm} using that Hecke characters define unitary characters of congruence subgroups, which in turn are connected to Eisenstein congruences. In Section \ref{geometry}, we introduce some geometric and arithmetic properties of $\H^{n+1}$, including the Vahlen model to study cofinite subgroups of isometries. In Section \ref{eisenstein}, we develop the analytic properties of the $n$-dimensional twisited Eisenstein series and the spectral properties of the twisted Laplacian. In Section \ref{main results}, we use the tools developed in the previous sections to prove our main results. In Appendix \ref{cohomology}, we give a detailed literature survey on the structure of the cohomology groups, to which our results apply.

\section*{Acknowledgements}
We would like to thank Paul Gunnells, Yiannis Petridis, Morten Risager, Djordje Mili{\'c}evi{\'c} for useful discussions and feedback and to Ian Kiming for his help with \cite{Mazur77}. The first author was supported by the Engineering and Physical Sciences Research Council [EP/L015234/1], the EPSRC Centre for Doctoral Training in Geometry and Number Theory (The London School of Geometry and Number Theory), University College London.

\section{Idea of proof}
\label{sketch}
We will sketch the proof of Theorem \ref{mainthm1} in the simplest case, which is the one dealt with in \cite{LeeSun19}, where we consider only one cusp form for $\H^2$ and no restrictions on the location of $r=a/q$ in $\R / \Z$.  
Our method is automorphic in nature and relies on the theory of Eisenstein series. It can be seen as a discrete version of the method introduced by Petridis and Risager in \cite{PeRi2} for studying the distribution of modular symbols. They consider the perturbation of the family of characters $\chi^\eps$ as $\eps\rightarrow 0$, whereas we consider the discrete family $\chi
^m$ for $m\in\Z$. 

Let $f\in \mathcal{S}_2(\Gamma_0(N))$ be a Hecke eigenform of weight 2 and level $N$ and let $\mathfrak{m}_{f}^\pm:\Gamma_0(N) \rightarrow \Z$ be the associated normalised modular symbols defined above. Recall that this defines a non-trivial additive character. We would like to show that the values of $\mathfrak{m}_{f}^\pm$ on the set $\Omega_{Q,N}=\{a/q\mid 0<a<q\leq Q, (a,q)=1, N|q\}$ equidistribute mod $p$ as $Q\rightarrow \infty$. 

To do this we introduce for any $l\in (\Z/p\Z)^\times$ the unitary character $\chi_l:\Gamma_0(N)\rightarrow \C^\times$ defined by
$$ \chi_l(\gamma):= e^{2\pi i \mathfrak{m}_{f}^\pm(\gamma)l /p},\quad \gamma\in \Gamma_0(N).$$
By Weyl's Criterion \cite[page 487]{IwKo} in order to conclude equidistribution, it suffices to detect cancelation in the Weyl sums; that is to prove for all $l\in (\Z/p\Z)^\times$ that
$$  \sum_{a/q\in \Omega_{Q,N}} \chi_l(a/q)=o(Q^2), $$
as $\Q\rightarrow \infty$, where $\chi_l(a/q):= \chi_l\left(\gamma\right)$ with $\gamma\in \Gamma_0(N)$ such that $\gamma \infty=a/q$. 

Now, the key observation is that the generating series for these Weyl sums appears very naturally as the constant term of an appropriate Eisenstein series. The cancelation in the Weyl sums is now a simple analytic consequence of the analytic properties of the corresponding Eisenstein series. To be precise; associated to $\chi_l$ we have the following twisted Eisenstein series:
$$  E(z,s,\chi_l)= \sum_{\gamma\in \Gamma_\infty\backslash \Gamma_0(N)} \overline{\chi_l}(\gamma) \Im (\gamma z)^s,$$  
where $\Gamma_\infty= \left< \begin{psmallmatrix}1 & 1\\ 0 & 1\end{psmallmatrix}\right>$. This Eisenstein series defines a holomorphic function for $\Re s>1$ and by the work of Selberg \cite[Chap. 39]{Selberg1} admits meromorphic continuation to the entire complex plane with a pole at $s=1$ if and only if $\chi_l$ is trivial. Note that in general the character $\chi_l$ might not come from an adelic one, but Selberg's theory applies equally well.

Now a standard calculation using Poisson summation shows that the constant term of the Fourier expansion of $E(z,s,\chi_l)$ is given by 
$$  y^s+\frac{\pi^{1/2} y^{1-s}\Gamma(s-1/2)}{\Gamma(s)}L_{l}(s), $$
with
$$L_{l}(s):=\sum_{c>0,N\mid c} \left(\sum_{0<d<c, (c,d)=1} \overline{\chi_{l}}\left(\begin{psmallmatrix} a &b \\ c & d\end{psmallmatrix} \right)  \right) c^{-2s},$$
where $\begin{psmallmatrix} a &b \\ c & d\end{psmallmatrix}$ is a(-ny) matrix in $ \Gamma_0(N)$ with lower entries $c,d$. We observe that $L_{l}(s)$ is exactly the generating series for the Weyl sums above, as promised.  

Now from the meromorphic continuation of the Eisenstein series itself, we also get meromorphic continuation of the generating series $L_{l}(s)$, and since $\chi_l$ is non-trivial we conclude that $L_l(s)$ is analytic for $\Re s>1-\delta$ for some $\delta>0$. Thus we get the wanted cancelation in Weyl sums using the standard machinery from complex analysis if we can get bounds on vertical lines of $L_l(s)$. It turns of that such bounds follow from the general bound for scattering matrices also due to Selberg, and thus we are done. 

This shows how to deduce equidistribution of modular symbols using Eisenstein series. The proof for classes in the first cohomology of quotients of higher dimensional hyperbolic spaces uses the same idea, although some parts of the argument require some more technical work. In order to obtain equidistribution results when restricting the cusps to a specific interval $I\subset \R/\Z$, we will have to use all the Fourier coefficients of the Eisenstein series as is done in \cite{PeRi}. 
\section{Some special cases of the conjecture of Mazur and Rubin}\label{special case}
In this section we will consider certain special cases of the conjectures of Mazur and Rubin (and the generalization to $\H^3$), which we can resolve {\it without} taking an extra average. These special cases correspond to the fact that Hecke characters define unitary characters of congruence subgroups, which in turn are connected to Eisenstein congruences as studied intensively by Mazur in \cite[Section 9]{Mazur77} and \cite{Mazur79}. 

First of all we will define the relevant cohomology classes and introduce the Hecke operators in this context. Recall that for a discrete, cofinite subgroup $\Gamma\subset \SL_2(k)$ with $k=\R$ or $\C$ and an element $\alpha\in \tilde{\Gamma}$ of the commensurator of $\Gamma$, we have a decomposition
$$ \Gamma \alpha \Gamma= \bigsqcup_{i=1}^d \Gamma \alpha_i   $$
for some $\alpha_1,\ldots, \alpha_d\in \tilde{\Gamma}$. 
Using this we define the {\it Hecke operator} $T_\alpha$ acting on the cohomology group $H^1(\Gamma, X)$ with $X$ a trivial  $\Gamma$-module as:
\begin{align} \label{heckedef}(T_\alpha \omega)(\gamma):= \sum_{i=1}^d  \omega(\gamma_i), \end{align}
where $\alpha_i\gamma= \gamma_i \alpha_{\sigma(i)}$ with $\gamma_i\in\Gamma$ and $\sigma$ some permutation of $\{1,\ldots d\}$. 

We will consider the case of congruence subgroups 
$$\Gamma_0(\mathfrak{f})=\{ \gamma\in \SL_2(\mathcal{O}_K)\mid \gamma \equiv \begin{psmallmatrix} \ast& \ast\\ 0 & \ast  \end{psmallmatrix} \modulo \mathfrak{f}\},$$
where $K$ is equal to $\Q$ or an imaginary quadratic extension thereof and $\mathfrak{f}$ is a non-trivial ideal of $\mathcal{O}_K$. In this case we have $\widetilde{\Gamma_0}(\mathfrak{f})=\GL_2(K)$ and the parabolic subgroup fixing $\infty$ is $\Gamma_\infty'= \begin{psmallmatrix} \pm 1 & \mathcal{O}_K\\ 0& \pm 1\end{psmallmatrix}$. We say that a Hecke operator is {\it good} if it is of the form $T_\alpha$, where $\alpha= \begin{psmallmatrix} a & 0\\ 0 & 1   \end{psmallmatrix}$ with $\gcd(\mathfrak{f}, (a))=1$.

\begin{prop}\label{eisensteincongruence}
Let  $ m$ be an odd integer diving $ |\left(\mathcal{O}_K/\mathfrak{f}\right)^\times|$. Then there exists a class $\omega\in H^1_{\Gamma_\infty'}(\Gamma_0(\mathfrak{f}), \Z/m\Z )$, which is an eigenvector for all good Hecke operators and such that for all $a\in \Z/m\Z$ and $c_0\in \mathfrak{f}$, it satisfies
\begin{align}\label{onthenose} \frac{\#\{ \gamma\in \Gamma_\infty\backslash \Gamma_0(\mathfrak{f})/\Gamma_\infty \mid c_\gamma=c_0, \omega(\gamma)=a\}}{\#\{ \gamma\in \Gamma_\infty\backslash \Gamma_0(\mathfrak{f})/\Gamma_\infty \mid c_\gamma=c_0\}}=\frac{1}{m},  \end{align}
where $c_\gamma$ denotes the lower left entry of $\gamma$. 
\end{prop}
\begin{proof}
Let $\chi: \left(\mathcal{O}_K/\mathfrak{f}\right)^\times \rightarrow \C^\times $ be a unitary Hecke character of order $m$. Then we define a character of $\Gamma_0(\mathfrak{f})$ by 
\begin{equation}\label{omegaX}\begin{psmallmatrix} a& b\\ c& d \end{psmallmatrix} \mapsto \chi(d).\end{equation}
This character is clearly trivial on $\Gamma_\infty'$ since the order $m$ of $\chi$ is odd, and thus (\ref{omegaX}) defines an element $\omega_\chi\in H^1_{\Gamma_\infty'}(\Gamma_0(\mathfrak{f}), \Z/m\Z)$. For $T_\alpha$ a good Hecke operator, it is easy to check that $\alpha_i$ (with notation as in (\ref{heckedef})) can all be chosen of the form $\begin{psmallmatrix} \ast& \ast\\ 0 & \ast  \end{psmallmatrix} $ with determinant equal to the determinant of $\alpha$ (thus the diagonal entries are coprime to $\mathfrak{f}$).  Combining this with $\gamma_i=\alpha_i \gamma \alpha_{\sigma(i)}^{-1}$, one easily sees that
$$ \left(T_\alpha(\omega_\chi)\right)(\gamma)=d \omega_\chi(\gamma) ,  $$
where $d=|\Gamma_0(\mathfrak{f})\backslash\Gamma_0(\mathfrak{f}) \alpha \Gamma_0(\mathfrak{f})|$. This shows that $\omega_\chi$ is an Hecke eigenclass with eigenvalue $d$, as wanted. 

Finally, recall the basic fact that a set of representatives of  $\Gamma_\infty \backslash \Gamma_0(\mathfrak{f})/\Gamma_\infty$ is given by 
$$\{\begin{psmallmatrix} \ast & \ast\\ c & d   \end{psmallmatrix}\mid c\in \mathfrak{f}, d\in (\mathcal{O}_K/(c))^\times\}.  $$
From this, the equidistribution statement (\ref{onthenose}) follows directly. 
\end{proof}
It is a natural question to ask how the cohomology classes constructed above are related to the modular symbols defined in (\ref{oddevenMS}). To tackle this we need to understand so-called \emph{Eisenstein congruences}, which have been studied intensively by Mazur \cite{Mazur77}. We will now introduce some required terminology and refer to \cite{Mazur77} for a detailed account: We say that a pair of primes $(N,p)$ with $N,p\geq 5$ and $p|N-1$ is \emph{admissible} if the local ring $\mathbb{T}_\mathfrak{P}$ has rank $1$ over $\Z_p$ where $\mathbb{T}$ is the Hecke algebra of level $N$ and $\mathfrak{P}\subset \mathbb{T}$ is the Eisenstein prime corresponding to $p$. In classical terms $(N,p)$ being admissible means that there is a unique cuspidal Hecke eigenform of level $N$ which is congruent to the Eisenstein series of weight $2$ (i.e. $f\in \mathcal{S}_2(\Gamma_0(N))$ s.t. the Hecke eigenvalues satisfy $\lambda_f(l)\equiv l+1\modulo p$ for primes $l\neq N$ and $Uf=-f$ where $U$ is the Hecke operators at $N$). By a computation of Merel \cite{Merel96}$, (N,p)$ is admissible exactly if 
$$\prod_{k=1}^{p-1}((k(N-1)/p)!)^k,$$
is a $p$-power in $(\Z/N\Z)^\times$. Note that all pairs of primes $(N,p)$ with $N<250$ are admissible unless $N=31,103,127,131,181,199,211$ (see the remark on \cite[p. 141]{Mazur77}). In the admissible case we have the following strengthening of Proposition \ref{eisensteincongruence} (see \cite[Chapter II, Proposition 18.8]{Mazur77} for a very related result).
\begin{thm}
For an admissible pair of primes $(N,p)$  with $N,p\geq 5$ and $p|N-1$, there exists a Hecke eigenform $f\in \mathcal{S}_2(\Gamma_0(N))$ of weight 2 and level $N$ such that the values of $\mathfrak{m}^+_{f}$ (defined as in (\ref{oddevenMS})) on $\{\frac{a}{q}\mid (a,q)=1, 0<a<q\}$ equidistribute exactly modulo $p$ for $q\equiv 0\modulo p$.
\end{thm}
\begin{proof}
Let $\chi$ be a Dirichlet character mod $N$ of order $p|N-1$. Then by Proposition \ref{eisensteincongruence} we have an associated cohomology class $\omega_\chi\in H^1_{\Gamma_\infty}(\Gamma_0(N), \Z/p\Z)$ which equidistributes as above and such that $T_l \omega_\chi=(l+1)\omega_\chi$ for all primes $l\neq N$, where $T_l$ is the Hecke operator corresponding to the matrix $\begin{psmallmatrix} l & 0 \\ 0& 1\end{psmallmatrix}$. Furthermore, $\omega_\chi$ satisfies $U\omega_\chi=-\omega_\chi$, where $U$ is the Hecke operator at the bad prime $N$ given by conjugation by $\begin{psmallmatrix} 0 & 1\\N& 0 \end{psmallmatrix}$. Also $\omega_\chi$ is  trivial on the stabilizer $\langle \begin{psmallmatrix}\pm1 & 0\\ 1 & \pm1\end{psmallmatrix}\rangle$ of the cusp $0$ (using that the order of $\chi$ is odd) and thus $\omega_\chi$ defines a parabolic cohomology class. By a {\lq\lq}mod $p$ version{\rq\rq} of  Eichler--Shimura isomorphism (as in \cite[(3.5)]{LeeSun19}), we see that the associations $f\mapsto \mathfrak{m}_{f}^\pm$ for Hecke eigenforms $f$ induce a Hecke-equivariant isomorphism 
\begin{align} \label{modpES}H_P^1(\Gamma_0(N), \Z/p\Z)\cong  \mathcal{S}_2(\Gamma_0(N))_{\F_p}\oplus\mathcal{S}_2(\Gamma_0(N))_{\F_p}\end{align} 
where $\mathcal{S}_2(\Gamma_0(N))_{\F_p}$ denotes the space of cusp forms of weight 2 and level $N$ with coefficients in $\F_p$ (which we will just think of as the formal $\F_p$-vector space generated by Hecke eigenforms of weight 2 and level $N$). Here we use that there is no $p$-torsion in $\Gamma_0(N)$. By the assumption that $(N,p)$ is admissible we conclude that there exists a Hecke eigenform $f\in \mathcal{S}_2(\Gamma_0(N))$ such that $\omega_\chi$ is a linear combination of $\mathfrak{m}_f^\pm$.




Finally, we recall that $H_P^1(\Gamma_0(N), \Z/p\Z)$ can be diagonalized by the involution $\iota$  given by conjugation with $\begin{psmallmatrix} -1 & 0\\0& 1 \end{psmallmatrix}$ (here we need $p>2$), which follows from e.g. \cite[Sec. 1]{MaRu}. We see directly that the eigenvalue of  $\omega_\chi$ under the action of $\iota$ is $+1$. Thus we conclude that $\mathfrak{m}^+_{f}=m\cdot \omega_\chi$ for some $m\in (\Z/p\Z)^\times$. This gives the wanted. 
\end{proof}

This settles the conjecture of Mazur and Rubin in these very special cases, whereas in general the conjecture seems out of reach without the extra average both with the automorphic and the dynamical approach. 
\begin{remark}\label{strictlyspeaking}
Strictly speaking the conjecture of Mazur and Rubin \cite{MaRu} is only formulated for primes $p$ and cusp forms corresponding to elliptic curves $E$ where the residual representation of $E$ mod $p$ is surjective and $p$ is an ordinary and good prime of $E$. This is not the case in the example considered above, but the above seems like the natural generalization of the conjecture to this case. 
\end{remark} 
\begin{remark}
The assumption that $N$ is prime is essential for the results of \cite{Mazur77} to apply. For composite level (and for imaginary quadratic fields) the situation becomes much more complicated as \emph{multiplicity one} might fail (see e.g. \cite{WakeWang}). 
\end{remark}

 \section{Geometry of $\H^{n+1}$}
 \label{geometry}

We introduce the upper half-space (Poincar\'e) model $\H^{n+1}$ for the $(n+1)$-dimensional hyperbolic space. We briefly describe some geometric and arithmetic properties of the space $\Gamma \backslash \H^{n+1}$, where $\Gamma$ is a cofinite discrete subgroup of isometries. We make use of a specific model for the group of isometries given in terms of a certain Clifford algebra. Our main references for this section are \cite{Ah86}, \cite{vahlen1} and \cite{vahlen2}.  


\subsection{Clifford algebra}
We will now describe the {\it upper-half space model} $\mathbb{H}^{n+1}$ for hyperbolic $(n+1)$-space. Let $q:\R^n \to \R$ a quadratic non-degenerate form and $\mathcal{C}(q)$ the associated {\it Clifford algebra}, i.e. the free $\R$-algebra on $\{e_1, \dots, e_n \}$ modulo the relations
\begin{equation*}
    e_i^2= q(e_i), \quad e_i e_j =-e_j e_i, \quad \text{where} \quad   i,j=1, \cdots, n, \ i\neq j ,
\end{equation*}
where $e_1, \dots, e_n$ is a $q$-orthonormal basis for $\R^n$. We denote by $\mathcal{E}_n$ the set of all subsets of $\{ 1, \dots, n\}$. Then for $M= \{i_1, \dots, i_k \} \in \mathcal{E}_n$ with $i_1 < \cdots < i_k$, we define
\begin{equation*}
    e_M := e_{i_1} \cdot \dots \cdot e_{i_k}, \quad e_{\emptyset} := 1 \in \mathcal{C}(q).
\end{equation*}
Then one can check that $\{ e_M \mid M \in \mathcal{E}_n\}$ is a $\R$-basis for $\mathcal{C}(q)$.

We have two linear involutions on $\mathcal{C}(q)$ given by
\begin{align*}
    \overline{e_M} := (-1)^{|M|(|M|+1)/2}e_M, \quad e_M^* : = (-1)^{|M|(|M|-1)/2}e_M, \quad \text{where } M \in \mathcal{E}_n.
\end{align*}
They satisfy
\begin{equation*}
    \overline{vw} = \overline{w} \ \overline{v}, \quad (v w)^* = w^* v^*, \quad \text{for all } v,w \in \mathcal{C}(q).
\end{equation*}

From now on we assume that $q=-I_n$, the negative definite unit form, and $e_1,\ldots, e_n$ the standard basis. In this case we write $\mathcal{C}_n$ for $\mathcal{C}(q)$. We denote by $V_n \subset \mathcal{C}_n$ the vector space spanned by $\{1, e_1, \dots, e_n \}$. It is easy to see that $V_0 \cong \R$ and $V_1 \cong \C$ as $\R$-algebras.  

$V_{n}$ is equipped with the inner product
\begin{equation*}
    \inprod{v}{w}=\frac{1}{2}(v \overline{w}+ \overline{v}w ) .
\end{equation*}
We note that this coincides with the standard Euclidean inner product if we identify $V_n$ with $\R^{n+1}$ using the basis $\{1, e_1, \dots, e_n \}$.

For $x= \sum_{M \in \mathcal{E}_n} \lambda_M e_M \in \mathcal{C}_n$, we define the norm
\begin{equation}
    \label{norm}
    |x|:= \left ( \sum_{M \in \mathcal{E}_n} \lambda_M^2\right )^{1/2}.
\end{equation}
We note that for $x \in V_n$, we have $|x|^2=\inprod{x}{x}$.
Now, if $\Lambda < V_n$ is a lattice, we define the dual lattice as
\begin{equation*}
    \Lambda^{\circ}:= \{w \in V_n \ | \ \inprod{v}{w} \in \Z \text{ for all } v \in \Lambda \}.
\end{equation*}

We now define the following model of hyperbolic $(n+1)$-space:
\begin{equation*}
    \label{upper space}
    \H^{n+1}:= \{x_0 + x_1 e_1 + \dots + x_n e_n \ | \ x_0, x_1, \dots, x_{n-1} \in \R, x_n > 0 \} \ .
\end{equation*}
We have the maps $x: \H^{n+1} \to V_{n-1}$ and $y:\H^{n+1} \to (0, \infty)$ given by
\begin{equation*}
    x(P):= x_0 +  x_1 e_1 + \dots + x_{n-1} e_{n-1} , \quad y(P):= x_n,
\end{equation*}
where $P=x_0 + x_1 e_1 + \dots + x_n e_n \in \H^{n+1}$. We can think of $x(P)$ as an element of $\R^n$ via the above. Then from \eqref{norm} we see that
\begin{equation*}
    |P|^2=|x(P)|^2 + |y(P)| ^2.
\end{equation*}

We equip $\H^{n+1}$ with the hyperbolic metric coming from the line element:
\begin{align}
    \label{linemetric}
    ds^2=\frac{dx_0^2+dx_1^2+\dots+dx_n^2}{x_n^2} \ ,
\end{align}
which makes $\H^{n+1}$ a Riemannian manifold with constant negative curvature $-1$. The volume element is given by
\begin{equation*}
    \label{volumemetric}
    dv=\frac{dx_0 dx_1 \dots x_n}{x_n^{n+1}} .
\end{equation*}

The {\it hyperbolic Laplace--Beltrami operator} is  given by
\begin{equation}
    \label{laplaceoperator}
    \Delta = x_n^2 \left ( \pdv[2]{}{x_0} + \pdv[2]{}{x_1} + \dots + \pdv[2]{}{x_n} \right ) - (n-1) x_n \pdv{x_n}
\end{equation}
in this model.

\subsection{Vahlen group}
\label{vahlen}
We will use the above upper-half space model to describe the group of (oriented) isometries $\mathrm{Isom}^+(\H^{n+1})$ in a way that is convenient for our purposes. We let $T_n \subset \mathcal{C}_n$ be the multiplicative subgroup generated by $V_n \setminus \{ 0 \}$. As in \cite[p. 219]{Ah86} or \cite[p. 648]{vahlen2}, we define the {\it Vahlen group} $\mathrm{SV}_n$ to be
\begin{equation}
    \label{SV_n}
    \mathrm{SV}_n := \left \{ 
\begin{tabular}{ c|c } 
\multirow{3}{9em}{$\begin{pmatrix} a & b \\ c & d \end{pmatrix} \in M_2 (\mathcal{C}_n)$} & (i) $a,b,c,d \in T_n \cup \{0\}$ \\ 
 &  (ii) $\overline{a}b,\overline{c}d \in V_{n}$ \\ 
 &  (iii) $ad^*-bc^*=1$ \\
\end{tabular}
    \right \}.
\end{equation}
We can easily check that $\mathrm{SV}_0=\mathrm{SL}_2(\R)$ and $\mathrm{SV}_1=\mathrm{SL}_2(\C)$ as $\R$-algebras. Then it is a non-trivial fact that $\mathrm{SV}_n$ is a group under matrix multiplication with inverse
\begin{equation}
    \label{inverse matrix}
    \begin{pmatrix} a & b \\ c & d\end{pmatrix}^{-1}= \begin{pmatrix} d^* & -b^* \\ -c^* & a^* & \end{pmatrix} \ .
\end{equation}

We can now define the action of $\mathrm{SV}_{n-1}$ on $\H^{n+1}$, which resembles the actions of $\SL_2(\R)$ and $\SL_2(\C)$ on $\H^2$ and $\H^3$, respectively, as can be seen from the following result.

\begin{thm}[\cite{vahlen2}, Theorem 1.3]
Let $\gamma = \begin{psmallmatrix} a &b \\ c & d\end{psmallmatrix} \in \mathrm{SV}_{n-1}$ and $P \in \H^{n+1}$. Then $cP+d \in T_n$ and we define
\begin{equation}
    \label{action}
    \gamma P := (aP+b) (cP+d)^{-1} \in \H^{n+1}.
\end{equation}
The map $P \mapsto \gamma P$ is an orientation preserving isometry of $\H^{n+1}$. Moreover, all orientation preserving isometries are obtained in this way and we have the induced isomorphism $\mathrm{SV}_{n-1} / \{ I , -I\} \cong \mathrm{Isom}^+(\H^{n+1})$.
\end{thm}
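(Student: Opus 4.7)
The plan is to verify, in order: (a) the action is well-defined, i.e.\ $cP+d \in T_n$ and $\gamma P \in \H^{n+1}$; (b) $\gamma$ acts as an orientation-preserving isometry; (c) every such isometry arises from $\mathrm{SV}_{n-1}$; and (d) the kernel of the action is $\{\pm I\}$.

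For (a) I would compute $(cP+d)\overline{(cP+d)}$ directly. Writing $P = x + ye_n$ with $x \in V_{n-1}$ and $y>0$, and using condition (ii) $\overline{c}d \in V_{n-1}$ together with the relation $e_n \overline{v} = v e_n$ for $v \in V_{n-1}$ (a direct consequence of the anticommutation $e_ne_i = -e_ie_n$), the apparent Clifford cross-terms collapse and one obtains
\[
|cP+d|^2 = |cx+d|^2 + y^2 |c|^2 \in \R_{>0},
\]
with simultaneous vanishing of $c$ and $d$ ruled out by (iii). This both verifies $cP+d \in T_n$ and furnishes its inverse $\overline{(cP+d)}/|cP+d|^2 \in \mathcal{C}_n$. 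An analogous but slightly longer computation of $(aP+b)\overline{(cP+d)}$, now using (ii) on both rows and the ``pseudo-determinant'' relation (iii) $ad^* - bc^* = 1$, shows that the result lies in $V_n$ and that its $e_n$-coefficient equals $y$, yielding
\[
y(\gamma P) = \frac{y(P)}{|cP+d|^2} > 0,
\]
which simultaneously places $\gamma P$ in $\H^{n+1}$ and supplies the transformation rule driving the isometry property.

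For (b), rather than attacking invariance of the line element \eqref{linemetric} by a direct Jacobian calculation, I would decompose $\gamma$ into generators: translations $\begin{psmallmatrix} 1 & b \\ 0 & 1 \end{psmallmatrix}$ with $b \in V_{n-1}$, ``diagonal'' elements $\begin{psmallmatrix} a & 0 \\ 0 & (a^*)^{-1} \end{psmallmatrix}$ with $a \in T_{n-1}$, and the inversion $J = \begin{psmallmatrix} 0 & -1 \\ 1 & 0 \end{psmallmatrix}$. Each acts manifestly as an orientation-preserving isometry: translations by inspection; the diagonal action $P \mapsto a P a^*/|a|^2$ is a Euclidean similarity composed with norm-preserving Clifford conjugation; and $J$ sends $P$ to $-P^{-1}$, whose isometry property follows from $|P^{-1}| = |P|^{-1}$ together with the identity from step (a) specialised to $J$. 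A Bruhat-style case split on whether $c = 0$ reduces a general $\gamma$ to a product of such generators.

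For (c), I would invoke the classical description of $\mathrm{Iso}^+(\H^{n+1})$ as generated by translations of $\R^n$, Euclidean rotations fixing $\infty$, dilations, and the unit-sphere inversion, each of which has been realised by an explicit Vahlen matrix above; combined with transitivity on $\H^{n+1}$ and the identification of the stabiliser of $e_n$ with $\mathrm{Spin}(n)$ sitting inside the diagonal subgroup of $\mathrm{SV}_{n-1}$, this gives surjectivity. For (d), if $\gamma P = P$ for all $P$, then letting $y \to \infty$ along $P = ye_n$ forces $c = 0$, next $b = 0$, and finally $a P a^{-1} = P$ for every $P$, which together with $ad^* = 1$ forces $a = \pm 1$. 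The main obstacle is the Clifford-algebraic bookkeeping in step (a): verifying that $(aP+b)\overline{(cP+d)}$ has no components outside $V_n$ and that its $e_n$-coefficient simplifies to precisely $y$ is the non-commutative computation that dictates the very definition of $\mathrm{SV}_{n-1}$ through conditions (i)--(iii). Once that key identity is secured, the remaining parts reduce to standard M\"obius-group arguments generalising the familiar $\mathrm{SL}_2(\R)$ and $\mathrm{SL}_2(\C)$ cases.
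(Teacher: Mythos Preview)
The paper does not prove this theorem at all: it is quoted verbatim as \cite[Theorem 1.3]{vahlen2} and used as a black box, so there is no ``paper's own proof'' to compare against. Your sketch is a reasonable outline of how the result is established in the source (Ahlfors \cite{Ah86} and Elstrodt--Grunewald--Mennicke \cite{vahlen1,vahlen2}): the well-definedness computation in (a), the reduction to generators in (b), and the kernel argument in (d) are exactly the standard route. The one place to be careful is your claim in (c) that the stabiliser of $e_n$ is realised by $\mathrm{Spin}(n)$ inside the diagonal; making this precise requires knowing that $T_{n-1}$ acts on $V_{n-1}$ via the full special orthogonal group, which is itself a nontrivial fact about Clifford algebras, but it is indeed how the surjectivity is handled in \cite{vahlen1}.
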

 What is convenient about this description of $ \mathrm{Isom}^+(\H^{n+1})$ is that one gets very familiar expressions for the coordinate-projections of the image under the action of $\gamma \in \mathrm{SV}_{n-1}$ on $P=(x,y) \in \H^{n+1}$.
\begin{lemma}[\cite{vahlen2}, page 648]
Let $\gamma = \begin{pmatrix} a &b \\ c & d\end{pmatrix} \in \mathrm{SV}_{n-1}$ and $P=x + y e_n \in \H^{n+1}$. Then
\begin{equation}
    \label{y of action}
    x(\gamma P) = \frac{(ax+b)(\overline{cx+d}) + a \overline{c} y^2 }{|cx+d|^2 + |c|^2 y^2} \quad \text{and} \quad y(\gamma P) = \frac{y}{|cx+d|^2 + |c|^2 y^2}.
\end{equation}
\end{lemma}

\begin{remark}
Our model for the hyperbolic $(n+1)$-space is consistent with other descriptions from the literature. For example, one can consider the Klein model $\K^{n+1}$ on which isometries are described by $\mathrm{SO}(n+1,1)$. Then there exists an bijection $\Phi:\H^{n+1} \to \K^{n+1}$ and an isomorphism $\Psi:\mathrm{SV}_{n-1}/\{ \pm I\} \xrightarrow{\sim} \mathrm{SO}^0(n+1,1)$ which commutes with the respective actions, i.e. $\Phi(\gamma \cdot P)= \Psi(\gamma) \Phi(P),$
for all $\gamma \in \mathrm{SV}_{n-1}$ and $P \in \H^{n+1}$. Here $\mathrm{SO}^0(n+1,1)$ is the component of the identity element in $\mathrm{SO}(n+1,1)$. We refer to \cite[Section 5]{vahlen1} for detailed descriptions of different models of the hyperbolic space.
\end{remark}

\subsection{Hyperbolic quotients}
\label{cofinite groups}
Let $\Gamma<\mathrm{SV}_{n-1}$ be a discrete subgroup of motions such that the surface $\Gamma \backslash \H^{n+1}$ has finite hyperbolic volume. 
We say that $\mathfrak{a} \in \R^n \cup \{\infty\}$ is a cusp for $\Gamma$ if it is fixed by a parabolic element in $\Gamma$. There exists a scaling matrix $\sigma_{\mathfrak{a}}\in  \mathrm{SV}_{n-1}$ such that $\sigma_{\a} \infty = \a$. We let $\Gamma_{\a} : = \{ \gamma \in \Gamma \mid \gamma \a = \a \}$ be the stabilizer of $\a$ in $\Gamma$. We define 
\begin{equation*}
    \Gamma_{\a}':= \Gamma_{\a} \cap \sigma_{\a}  \left \{  \begin{psmallmatrix} 1 & b \\ 0 & 1\end{psmallmatrix} \in \mathrm{SV}_{n-1} \right \} \sigma_{\a}^{-1} \ .
\end{equation*}
We note that $\Gamma_{\a}'$ consists of the parabolic elements in $\Gamma_{\a}$ together with the identity.

There exists a lattice $\Lambda_{\a} \leq \R^n$ such that 
\begin{equation*}
    \sa^{-1} \Gamma_{\a}' \sa =  \left \{ \begin{pmatrix} 1 & \lambda \\ 0 & 1 \end{pmatrix} \mid \lambda \in \Lambda_{\a} \right \} \ .
\end{equation*}
We let $\mathcal{P}_{\a}$ be a fundamental parallelogram for $\Lambda_{\a}$ with Euclidean area $\vol(\Lambda_{\a})$. 

We define the {\it dual lattice} of $\Lambda_{\a}^{\circ}$ as follows:
\begin{equation}
    \label{dual lattice}
    \Lambda_{\a}^{\circ}:= \{ \mu \in \R^n \mid \inprod{\mu}{\lambda} \in \Z \text{ for all } \lambda \in \Lambda_{\a} \} \ ,
\end{equation}
where $\inprod{\cdot}{\cdot}$ is the usual scalar product on $\R^n$.

For a cusp $\a$ and $Y>0$, we define the {\it cuspidal sector}
\begin{equation*}
    \mathcal{F}_{\a}(Y):=  \sigma_{\a} \{ (x,y) \mid x \in \mathcal{P}_{\a}   , y > Y\} \ .
\end{equation*}
Then for $Y$ large enough, there exists a fundamental domain $\mathcal{F}$ for $\Gamma \backslash \H^{n+1}$ and inequivalent cusps $\a_1, \cdots, \a_h \in \R^n \cup \{\infty\}$ such that we can write $\mathcal{F}$ as the disjoint union
\begin{equation}\label{funddomain}
    \mathcal{F}=\mathcal{F}_0 \sqcup \mathcal{F}_{\a_1}(Y) \sqcup \cdots \sqcup \mathcal{F}_{\a_h}(Y) \ ,
\end{equation}
where $\mathcal{F}_0$ is a compact set, see \cite[p. 8]{So12} or \cite[p. 5]{Sarnak90}.

For notational convenience, from now on we will focus only on the cusp at $\infty$. We drop the subscript by denoting $\Lambda:=\Lambda_{\infty}$, $\mathcal{P}:=\mathcal{P}_{\infty}$ etc. Our theory can be generalised to take all cusps into account.

We will now define our outcome space (\ref{outcomespace}) in precise terms. First we note that all elements in $\gag$ share the same lower left entry. Thus it makes sense to define
\begin{equation*}
    T_{\Gamma}(X):= \left \{ \begin{pmatrix} * & * \\ c & * \end{pmatrix} \in \gag \mid 0 < |c| \leq X \right \}, 
\end{equation*}
where $|c|$ denotes the Clifford norm (\ref{norm}). This is the natural generalisation of the outcome space considered by Petridis--Risager in \cite[p. 1002]{PeRi}. In (\ref{size of T(X)}) below, we provide an asymptotic formula for the size of $T_{\Gamma}(X)$. We put
\begin{equation}\label{C(gamma)}
    C(\Gamma):= \left \{ c \in T_n \mid \exists a,b,d\in T_n: \begin{psmallmatrix} a &b \\ c& d \end{psmallmatrix} \in \Gamma\right\}.
\end{equation}

If $\gamma = \big (\begin{smallmatrix} a & b \\ c & d \end{smallmatrix}\big) \in \Gamma$ then from the definition of the action \eqref{action}, we see that $\gamma \infty = a c^{-1}$, where $\gamma \infty$ is defined as the limit of $\gamma P$ as $P$ tends to the cusp at $\infty$. Also, from \cite[Lemma 1.4]{vahlen2}, we know that $a c^{-1} \in V_{n-1}$. 

We observe that $\gamma \infty$ is well-defined on double cosets in $\gag$ up to translations by the lattice $\Lambda$. Therefore we see that the map
\begin{align*}
    \gag &\to \R^n / \Lambda \cup \{ \infty \} \\
    \gamma &\mapsto \gamma \infty
\end{align*}
is well-defined using the identification of $V_{n-1}$ with $\R^n$ as above. A simple consequence of our main theorems is that $\gamma \infty$ become equidistributed on $\R^n / \Lambda$ as we vary along $ \gamma \in T_{\Gamma}(X)$ as $X \to \infty$.

\section{Twisted Eisenstein series for $\H^{n+1}$}
\label{eisenstein}
Let $\Gamma <\mathrm{SV}_{n-1}$, $\Gamma_\infty'$ and $\Lambda$ be as in the previous section. We now fix $\chi$ a unitary character of $\Gamma$ which is trivial on $\Gamma_\infty'$. From this we define the twisted Eisenstein series
\begin{equation}
    \label{eisenstein series}
    E(P,s,\chi)=\sum_{\Gamma_{\infty}' \backslash \Gamma} \overline{\chi(\gamma)} y(\gamma P)^s.
\end{equation}

It is absolutely convergent for $\Re(s)>n$ and satisfies
\begin{align*}
  E(\gamma P,s, \chi) &= \chi(\gamma) E(P, s,\chi), \\ 
  \Delta E(P, s,\chi) &= s(n-s) E(P,s,\chi) .
\end{align*}

We see that $E(P, s,\chi)$ is invariant under the action by the lattice $\Lambda$ and hence it has a Fourier expansion. It is well-known that the constant term in the Fourier expansion has the form $y^s+\phi(s,\chi) y^{n-s}$, where $\phi(s, \chi)$ is called the {\it scattering matrix}. Its basic properties are well-known,  see \cite[Ch. 6]{SarnakNotes}.

For $\mu, \nu \in \Lambda^{\circ}$ and $c \in C(\Gamma)$, we define the generalised Kloosterman sum as in \cite[Section 4]{vahlen2} using the Vahlen model:
\begin{align}
    \label{kloos}
    S(\mu, \nu , c, \chi) &:= \sum_{ \big (\begin{smallmatrix} a & b \\ c & d \end{smallmatrix}\big) \in \Gamma_{\infty}' \backslash \Gamma / \Gamma_{\infty}'} \overline{\chi} \left ( \begin{pmatrix}  a & b \\ c & d\end{pmatrix} \right ) e \left ( \inprod{a c^{-1}}{ \mu} + \inprod{d c^{-1}}{\nu}\right )\\
    &=\sum_{\substack{\gamma \in \gag \\ c_{\gamma}=c}} \overline{\chi(\gamma)}e(\inprod{\gamma \infty}{\mu} + \inprod{(\gamma^{-1} \infty)^*}{\nu}),
\end{align}
where $c_\gamma$ is the lower-left entry of $\gamma$ in the Vahlen model. We now calculate the Fourier expansion of the Eisenstein series using the techniques developed in \cite[p. 111--113]{ElGrMe98} and \cite[p. 676--678]{vahlen2}. We obtain 

\begin{align}
    \nonumber
    E(P,s, \chi)= [\Gamma_{\infty}:\Gi] y^s &+ y^{n-s}\frac{\pi^{n/2} \Gamma\left (s-\frac{n}{2} \right )}{{\vol(\Lambda)} \Gamma(s)} L(s, \chi) \\
   \label{fourier expansion} &+ \frac{2 \pi^s y^{n/2}}{{\vol(\Lambda)} \Gamma(s)}\sum_{\mu \in \Lambda^{\circ} \setminus \{0 \}} L(s, \mu, \chi) |\mu|^{s-n/2}K_{s-n/2}(2 \pi |\mu| y),
\end{align}
where
\begin{equation}
    \label{definition of L0}
    L(s, \chi): =   \sum_{\gamma \in T_{\Gamma}} \frac{\overline{\chi}(\gamma)}{|c_\gamma|^{2s}} = \sum_{c \in C(\Gamma)}\frac{S(0,0, c, \chi)}{|c|^{2s}}\ ,
\end{equation}
and for $\mu\neq 0$,
\begin{equation}
    \label{Lmu}
    L(s, \chi,\mu): = \sum_{\gamma \in T_{\Gamma}} \overline{\chi}(\gamma) \frac{e(\inprod{d_\gamma c_\gamma^{-1}}{\mu})}{|c_\gamma|^{2s}}= \sum_{c \in C(\Gamma)} \frac{S(0, \mu, c, \chi)}{|c|^{2s}}.
\end{equation}
For $\chi=1$ the trivial character, we just denote $L(s, \mu):=L(s, \mu, 1)$. 
We note that the explicit Fourier expansion we obtain in (\ref{fourier expansion}) is closely related to \cite[Thm. 9.1]{vahlen2}.

At other cusps $\mathfrak{a}\neq \infty$ of $\Gamma$, we will also need some information about the Fourier expansion. For this let $P^{\mathfrak{a}}=(x^{\mathfrak{a}},y^{\mathfrak{a}} )=\sigma_\mathfrak{a}^{-1}P$ denote the coordinates at $\mathfrak{a}$. Then the Fourier expansion at $\mathfrak{a}$ is given by \cite[Ch. 6, Prop. 1.42]{SarnakNotes}:
$$  E(P^{\mathfrak{a}},s, \chi)= \phi_\mathfrak{a}(s) (y^\mathfrak{a})^{n-s}+\sum_{\mu\in  \Lambda_{\a}^{\circ} \setminus \{0\}}\phi_\mathfrak{a}(s,\mu) (y^\mathfrak{a})^{n-s}K_{s-n/2}(2\pi n |\mu| y^{\mathfrak{a}}) e(\langle x^\mathfrak{a}, \mu\rangle),  $$
where $\phi_\mathfrak{a}(s,\mu)$ are the Fourier coefficients, which decay rapidly in $|\mu|$ (for $s$ fixed). 
In particular we observe that $E(P,s, \chi)$ is square integrable when restricted to $\mathcal{F}_\mathfrak{a}(Y)$ for $\mathfrak{a}\neq \infty$ (for $Y$ sufficiently large as in (\ref{funddomain})). 
\begin{remark}\label{51}
By inverting $\gamma$ in the definition of $L(s,  \chi, \mu)$, we observe that
\begin{align}
\label{def2}    L(s, \chi,\mu) = \sum_{\gamma \in T_{\Gamma}} \overline{\chi}(\gamma) \frac{e(\inprod{(\gamma^{-1} \infty)^*}{\mu})}{|c_\gamma|^{2s}} 
= \sum_{\gamma \in T_{\Gamma}} \chi(\gamma) \frac{e(\inprod{\gamma \infty}{\mu})}{|c_\gamma|^{2s}}.
\end{align}
\end{remark}

\subsection{Short discussion on spectral properties}
We say that a (measurable) function $f: \H^{n+1}\rightarrow \C$ is {\it $\chi$-automorphic} if it satisfies 
\begin{equation*}
 f(\gamma P)= \chi(\gamma) f(P) \ ,
\end{equation*}
for $P\in \H^{n+1}$ and $\gamma\in \Gamma$.

Denote by $\Ltwo$ the space of square integrable $\chi$-automorphic functions with respect to the hyperbolic metric. For $f,g \in \Ltwo$, we note that $f \overline{g}$ is $\Gamma$-invariant. Hence we can define the inner product
\begin{equation*}
    \inprod{f}{g}:= \int_{\mathcal{F}} f \overline{g} \ d v \ .
\end{equation*}

We let $\mathcal{D}(\chi) \subset \Ltwo$ be the subspace consisting of all $C^2$-functions such that $\Delta f \in \Ltwo$. Then one can see that $-\Delta: \mathcal{D}(\chi) \to \Ltwo $ is a symmetric and nonnegative operator, its spectrum consists of discrete and continuous parts with finitely many discrete points in the interval $[0,n^2/4)$. Let
\begin{equation*}
    0 \leq \lambda_0(\chi) \leq \lambda_1(\chi) \leq \cdots \leq \lambda_k(\chi)<n^2/4
\end{equation*}
be the eigenvalues in the interval $[0,n^2/4)$ (see
 \cite{Sarnak90} and \cite[Ch. 6]{SarnakNotes}). The Eisenstein series $E(z,s,\chi)$ admits meromorphic continuation to $s \in \C$ and satisfies the functional equation
\begin{equation*}
    E(P, n-s, \chi)=\phi(n-s, \chi) E(P,s,\chi) \ ,
\end{equation*}
where $\phi(s,\chi)$ is the scattering matrix. Moreover, $E(P,s,\chi)$ has poles where $\phi(s, \chi)$ has poles and viceversa. There are finitely many poles in the region $\Re(s)>n/2$, all of them simple and on the real line. If $n/2 < \sigma_0 \leq n$ is a pole of $E(P,s, \chi)$, denote by $u_{\sigma_0}$ its residue at $\sigma_0$. Then
\begin{equation*}
    u_{\sigma_0} \in \Ltwo \quad \text{and} \quad \Delta u_{\sigma_0}+\sigma_0(n-\sigma_0) u_{\sigma_0} = 0 \ .
\end{equation*}
For $0 \leq j \leq k$, let $s_j(\chi) \in (n/2, n]$ be such that $s_j(\chi)(n-s_j(\chi))=\lambda_j(\chi)$. We denote by
\begin{equation*}
    \label{spectrum}
    \Omega(\chi):= \{ s_{0}(\chi), \dots, s_{k}(\chi)\}.
\end{equation*}
Then the poles of $E(P,s,\chi)$ in $\Re s>n/2$ form a subset of $\Omega(\chi)$ (exactly the non-cuspidal part of the discrete spectrum). Moreover, we can see from \cite[Ch 6, p. 37]{SarnakNotes} that for $\chi$ trivial, we have
\begin{equation}
    \label{residue value}
    \Res_{s=n}E(P,s)=\frac{[\Gamma_{\infty}: \Gi]\vol(\Lambda)} {\mathrm{vol}(\GH)} \ .
\end{equation}

\subsection{Key lemmas}

In this section we will prove certain key analytic lemmas that we will need in the proofs of our theorems. First of all we will show that we can only have $\lambda_0(\chi)=0$ when $\chi$ is trivial. Secondly we obtain meromorphic continuation of the Fourier coefficients of the twisted Eisenstein series, which will serve as generating series for our distribution problems. Finally, we will prove a bound on vertical lines for these generating series.

The most conceptual way to see the first claim above is probably to use Green's identity
\begin{equation*}
    \int_{\mathcal{F}}(-\Delta u) u  dv = \int_{\mathcal{F}}\nabla u . \nabla u  \ dv + \int_{\partial \mathcal{F}} u (\nabla u . \bf{n}) d S.
\end{equation*}
If we have $\Delta u=0$, then the first integral is 0. The third integral should vanish since contributions from {\lq\lq}opposing sides{\rq\rq} in the boundary of the fundamental domain should cancel each other. This would force the second integral to be 0, which means $u$ is constant. This argument works in principle, but for example in \cite[Theorem 4.1.7]{ElGrMe98} they spend several pages making it rigorous in the three dimensional case. Instead we will give an argument using the Fourier expansion and the mean value theorem for harmonic functions.

\begin{lemma} \label{triviallemma}
We have that $\lambda_0(\chi)=0$ if and only if $\chi$ is trivial.
\end{lemma}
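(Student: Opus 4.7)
The ``if'' direction is immediate: if $\chi$ is trivial then the constant function $1\in L^2(\Gamma\backslash\H^{n+1})$ satisfies $\Delta\cdot 1=0$, giving $\lambda_0(1)=0$. For the converse, I would assume $\lambda_0(\chi)=0$ and fix a nonzero $u\in\Ltwo$ with $\Delta u=0$ (which is smooth by elliptic regularity); the plan is to show that $u$ is a nonzero constant, from which the $\chi$-automorphy $u(\gamma P)=\chi(\gamma)u(P)$ forces $\chi\equiv 1$.

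The first step is to Fourier-expand $u$ at the cusp at $\infty$. Since $\chi|_{\Gi}$ is trivial, $u$ is $\Lambda$-periodic, so
$$u(x,y)=\sum_{\mu\in\Lambda^\circ}a_\mu(y)e(\langle x,\mu\rangle).$$
Plugging into $\Delta u=0$ with \eqref{laplaceoperator} gives for $\mu=0$ the ODE $y^2a_0''-(n-1)ya_0'=0$, with general solution $a_0(y)=A+By^n$, and for $\mu\neq 0$ a Bessel-type equation whose bounded solution is proportional to $y^{n/2}K_{n/2}(2\pi|\mu|y)$ (the other solution $y^{n/2}I_{n/2}(2\pi|\mu|y)$ grows exponentially). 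Square-integrability near the cusp with respect to $y^{-n-1}dx\,dy$ forces $B=0$ and rules out the growing Bessel components, so $a_0(y)=A$ is constant while $a_\mu(y)$ decays exponentially in $y$ for $\mu\neq 0$. An analogous expansion at every other cusp (Fourier-twisted by the character $\chi|_{\Gamma_\a'}$ of $\Lambda_\a$ whenever this restriction is nontrivial) yields uniform boundedness of $u$ together with exponential decay of $\partial_y u$ high in each cuspidal sector.

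The main step is Green's identity on the truncated fundamental domain $\mathcal{F}_Y$, obtained from \eqref{funddomain} by cutting each cuspidal sector at height $Y$:
$$0=\int_{\mathcal{F}_Y}\bar u(-\Delta u)\,dv=\int_{\mathcal{F}_Y}|\nabla u|^2\,dv-\int_{\partial\mathcal{F}_Y}\bar u\,\partial_n u\,dS.$$
The boundary $\partial\mathcal{F}_Y$ consists of sides of $\mathcal{F}$ paired by elements of $\Gamma$, plus horocyclic pieces at height $Y$ in each cuspidal sector. Two sides identified by $\gamma\in\Gamma$ contribute with opposite signs (reversed outward normals) but equal magnitudes (since $|\chi(\gamma)|^2=1$), so they cancel. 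The horocyclic piece at $\infty$ contributes $Y^{1-n}\int_{\mathcal{P}}\bar u\,\partial_y u|_{y=Y}\,dx$; the cross term involving the constant $A$ integrates to zero over $\mathcal{P}$ (as $\int_{\mathcal{P}}\partial_y u\,dx=\vol(\Lambda)\,a_0'(Y)=0$), and the remaining terms are $O(e^{-cY})$ by the Fourier decay above, with similar bounds at every other cusp. Letting $Y\to\infty$ yields $\int_\mathcal{F}|\nabla u|^2\,dv=0$, so $u$ is constant, and $u\neq 0$ then forces $\chi\equiv 1$.

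The main technical obstacle is the rigorous setup of Green's identity: one must choose a fundamental domain whose boundary is paired exactly by $\Gamma$ (so that identified-side contributions truly cancel), and then the explicit Fourier control from the previous step is what forces the cuspidal horocyclic contributions to vanish in the limit $Y\to\infty$. As noted in the discussion preceding the lemma, an alternative route avoids this boundary bookkeeping by applying the spherical mean value theorem for hyperbolic harmonic functions directly to the Fourier data, leveraging that $u-A$ is a hyperbolic harmonic function tending uniformly to $0$ as $y\to\infty$ to propagate the cuspidal constancy globally.
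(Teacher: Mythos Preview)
Your proposal is correct but follows the Green's identity route, which is precisely the approach the paper explicitly \emph{avoids}. The paper's own proof shares your first step (Fourier expansion at each cusp, ruling out the $y^n$ term and the growing Bessel solutions via square-integrability, hence boundedness of $u$ on every cuspidal sector and thus on all of $\mathcal{F}$), but then finishes differently: since $\chi$ is unitary, $|u|$ is $\Gamma$-invariant and therefore bounded on all of $\H^{n+1}$, and the Mean Value Theorem for hyperbolic harmonic functions (a Liouville-type statement) immediately forces $u$ to be constant. No truncation, no boundary pairing, no limit $Y\to\infty$ is needed.

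The trade-off is exactly what the paper flags in the paragraph before the lemma: your Green's identity argument is the most conceptual one, but making the side-pairing cancellation rigorous for a general cofinite $\Gamma\subset\mathrm{SV}_{n-1}$ requires choosing and controlling a fundamental polyhedron carefully (the paper cites \cite[Theorem 4.1.7]{ElGrMe98} as spending several pages on this in the $n=2$ case). The MVT route bypasses this entirely at the cost of invoking a global Liouville-type fact about $\H^{n+1}$. Your closing remark mischaracterises that alternative slightly: the paper does not argue via ``$u-A\to 0$ to propagate cuspidal constancy'', it simply observes that a bounded hyperbolic-harmonic function on all of $\H^{n+1}$ is constant.
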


\begin{proof}
Suppose $\lambda_0(\chi)=0$ and let $u$ be a corresponding eigenvector, i.e. $u \in \Ltwo$ and $\Delta u = 0$. Then we can consider the Fourier expansion of $u$ at a cusp $\mathfrak{a}$ of $\Gamma$. We know from \cite[Ch. 6, p.10]{SarnakNotes} that the Fourier expansion of $u$ takes the form 
$$ c_{1,\mathfrak{a}}+c_{2,\mathfrak{a}} (y^\mathfrak{a})^n+\sum_{\mu\in \Lambda_{\a}^\circ\backslash\{0\}} a_{u, \mathfrak{a}}(\mu) (y^\mathfrak{a})^{n/2}K_{n/2}(2\pi n |\mu| y) e(\langle x,\mu\rangle).  $$
From the rapid decay of the $K$-Bessel function we see that if $c_{2,\mathfrak{a}}\neq 0$, then $u$ behaves like $(y^\mathfrak{a})^n$ close enough to $\mathfrak{a}$ and thus $\int_{F_\mathfrak{a}(Y) }|u(x,y)|^2 dxdy$ is divergent contradicting the fact that $u$ is square integrable. Thus $c_{2,\mathfrak{a}}= 0$ and we conclude again using the rapid decay of the $K$-Bessel functions that $u$ is bounded on $F_\mathfrak{a}(Y)$. Since $\mathfrak{a}$ was an arbitrary cusp we conclude that $u$ is bounded on all of $\mathcal{F}$. Thus since $\chi$ is unitary, we conclude that $u$ is bounded on all of $\H^{n+1}$. Now it follows from the {\it Mean Value Theorem for Harmonic Functions on} $\H^{n+1}$ that $u$ is constant. By definition, $u(\gamma P)=\chi(\gamma) u(P)$, for all $\gamma \in \Gamma$ and $P\in \H^{n+1}$. Thus we conclude that $\chi$ is the trivial character.

Therefore, if $\chi$ is trivial the unique eigenfunction of eigenvalue 0 is the constant one, and for $\chi$ non-trivial there are no eigenfunctions of eigenvalue 0. This finishes the proof.\end{proof}

We now obtain meromorphic continuation of the Fourier coefficients of the Eisenstein series and crucial information about the location of the poles.

\begin{prop}
\label{properties}
The Dirichlet series $L(s, \mu, \chi)$ admits meromorphic continuation to the entire complex plane. The possible poles in the half-plane $\Re s>n/2$ are contained in $\Omega(\chi)$. Furthermore, there is a pole at $s=n$ exactly if $\chi$ is trivial and $\mu=0$. In this case the residue is equal to
$$ \frac{[\Gamma_{\infty}: \Gi]\Gamma(n) \vol(\Lambda)^2} {\pi^{n/2} \Gamma\left (\frac{n}{2} \right )\vol(\GH)}  .$$
\end{prop}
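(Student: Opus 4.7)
The plan is to read meromorphic continuation of $L(s,\mu,\chi)$ off the Fourier expansion \eqref{fourier expansion} of $E(P,s,\chi)$ at the cusp $\infty$, transferring the known meromorphic properties of $E$ (from the spectral theory recalled in the previous subsection) to its individual Fourier coefficients, and then locating the poles in $\Re s>n/2$ and computing the residue at $s=n$ using Lemma \ref{triviallemma} and the residue formula \eqref{residue value}.

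For $\mu=0$, rearranging the constant term of \eqref{fourier expansion} yields, for any fixed $y>0$,
$$L(s,\chi)=\frac{\vol(\Lambda)\,\Gamma(s)}{\pi^{n/2}\Gamma(s-n/2)}\cdot y^{s-n}\bigl(a_0(s,y)-[\Gamma_\infty:\Gi]\,y^s\bigr),$$
where $a_0(s,y)$ denotes the constant Fourier coefficient of $E(P,s,\chi)$ in the $x$-variable. Since $\Gamma(s)/\Gamma(s-n/2)$ is holomorphic in $\Re s>n/2$, the meromorphic continuation of $L(s,\chi)$ follows directly from that of $E$, and its possible poles in this half-plane lie in $\Omega(\chi)$. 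For $\mu\neq 0$, I would solve \eqref{fourier expansion} for $L(s,\mu,\chi)$ in terms of the $\mu$-th Fourier coefficient $a_\mu(s,y)$ of $E$ and the Bessel factor $|\mu|^{s-n/2}K_{s-n/2}(2\pi|\mu|y)$. The main subtlety is that this division could a priori introduce spurious poles at zeros of the Bessel factor as a function of $s$; I dispatch this by noting that for any fixed $s_0\in\C$ the map $y\mapsto K_{s_0-n/2}(2\pi|\mu|y)$ is not identically zero on $(0,\infty)$, so one may choose $y>0$ with $K_{s_0-n/2}(2\pi|\mu|y)\neq 0$ to obtain a local meromorphic extension of $L(s,\mu,\chi)$ near $s_0$. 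By uniqueness of meromorphic continuation, these local extensions patch to a global meromorphic function whose poles in $\Re s>n/2$ are inherited from $E$ and hence contained in $\Omega(\chi)$.

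For the behaviour at $s=n$ I distinguish three cases. If $\chi$ is non-trivial, Lemma \ref{triviallemma} gives $\lambda_0(\chi)>0$, so $n\notin\Omega(\chi)$ and $L(s,\mu,\chi)$ is regular at $s=n$ for every $\mu$. If $\chi$ is trivial but $\mu\neq 0$, then $\Res_{s=n}E(P,s)$ is the constant function in \eqref{residue value}, whose $\mu$-th Fourier coefficient in $x$ vanishes; combined with the nonvanishing-$y$ trick above applied at $s_0=n$, this forces $L(s,\mu,1)$ to be regular at $s=n$. Finally, in the case $\chi$ trivial, $\mu=0$, the formula above reads $L(s,1)=\frac{\vol(\Lambda)\Gamma(s)}{\pi^{n/2}\Gamma(s-n/2)}\phi(s,1)$, where $\phi(s,1)$ is the scattering coefficient read off the constant term; and $\Res_{s=n}\phi(s,1)=\Res_{s=n}E(P,s)=\frac{[\Gamma_\infty:\Gi]\vol(\Lambda)}{\vol(\GH)}$ by \eqref{residue value}. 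Multiplying by the regular value of the prefactor at $s=n$ yields the asserted residue
$$\frac{[\Gamma_\infty:\Gi]\,\Gamma(n)\,\vol(\Lambda)^2}{\pi^{n/2}\Gamma(n/2)\vol(\GH)}.$$
The only real obstacle in the whole argument is the bookkeeping needed to rule out spurious poles coming from Bessel zeros in the $\mu\neq 0$ case, which the freedom to vary $y$ resolves cleanly.
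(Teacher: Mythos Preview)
Your proposal is correct and follows essentially the same route as the paper's proof: isolate each Fourier coefficient from the expansion \eqref{fourier expansion}, transfer the meromorphic continuation and pole locations of $E(P,s,\chi)$ to $L(s,\mu,\chi)$, handle the possible Bessel zeros by varying $y$, and read off the residue at $s=n$ from \eqref{residue value} together with Lemma \ref{triviallemma}. Your patching argument for the $\mu\neq 0$ case (choosing, for each $s_0$, a $y$ with $K_{s_0-n/2}(2\pi|\mu|y)\neq 0$ and invoking uniqueness of meromorphic continuation) is in fact a bit more explicit than the paper's one-line remark that ``the Bessel function $K_s(y)$ \ldots is non-zero for some $y$ large enough'', but the idea is identical.
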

\begin{proof}
From \eqref{fourier expansion}, we know that for $\mu \in \Lambda^{\circ}\backslash\{0\}$
\begin{equation*}
    L(s, \mu, \chi)=\frac{\Gamma(s)}{2 \pi^s y^{n/2} |\mu|^{s-n/2} K_{s-n/2}(2 \pi |\mu| y)} \int_{\mathcal{P}}E((x,y),s, \chi)e(-\inprod{x}{\mu}) dx,
\end{equation*}
and 
\begin{equation*}
    L(s,\chi)=\frac{y^{s-n}\Gamma(s)} {\pi^{n/2} \Gamma\left (s-\frac{n}{2} \right )} \left(\int_{\mathcal{P}}E((x,y),s, \chi) dx-[\Gamma_{\infty}: \Gi]y^s\right),
\end{equation*}
where $\mathcal{P}$ is a fundamental parallelogram for $\Lambda$. Now for $y>0$ fixed , the Bessel function $K_s(y)$ defines an analytic function in $s$, which is non-zero for some $y$ large enough. Similarly the Gamma function defines a meromorphic function. Thus we get the meromorphic continuation of $L(s,\mu,\chi)$ from that of the Eisenstein series. We also note that in the half-plane $\Re s>n/2$, $L(s, \mu, \chi)$ has possible poles only where $E(P,s, \chi)$ has poles, i.e. the poles are contained in $ \Omega(\chi)$. By Lemma \ref{triviallemma}, we see that $L(s,\mu,\chi)$ is regular at $s=n$ unless $\chi$ is trivial. 

If $\chi$ is trivial, we see that $L(s,\mu)$ with $\mu\neq 0$ is regular at $s=n$, since the pole of the Eisenstein series is constant. For $\mu=0$ the residue is given by
$$  \Res_{s=n}L(s,0)=\frac{\Gamma(n)} {\pi^{n/2} \Gamma\left (\frac{n}{2} \right )} \int_{\mathcal{P}}\frac{[\Gamma_{\infty}: \Gi]{\vol(\Lambda)}}{\vol(\GH)} dx= \frac{[\Gamma_{\infty}: \Gi]\Gamma(n){\vol(\Lambda)}^2} {\pi^{n/2} \Gamma\left (\frac{n}{2} \right )\vol(\GH)} , $$
as wanted.
\end{proof}

In order to obtain bounds on vertical lines for our generating series, we will use ideas due to Colin de Verdi\`{e}re \cite{CodeVe83}, which employs the analytic properties of resolvent operators. Alternatively, one could use Poincar\'{e} series for $\mu\neq 0$ and Maa{\ss}--Selberg for $\mu=0$ as is done in \cite{PeRi} and \cite{petru}. In the end the two methods are essentially equivalent. 

Let $h: \R^+ \to \R^+$ be a smooth function which is equal to $[\Gamma_{\infty}: \Gi]$  for $y>Y+1$ and 0 for $y<Y$, where $Y$ is as in (\ref{funddomain}). Then for $\Re (s)>n/2$ we define a $\chi$-automorphic function on $\H^{n+1}$ by $P\mapsto h(y)y^s$ for $P\in \mathcal{F}$ and extended periodically (twisted accordingly by $\chi$). Then from the above mentioned results on the Fourier expansions of the Eisenstein series at the different cusps, we see that 
$$  g(P,s,\chi):=E(P,s,\chi)-h(y)y^s\in L^2(\Gamma\backslash \H^{n+1},\chi), $$
which satisfies for $z\in \mathcal{F}$
$$ (\Delta-s(n-s))g(P,s,\chi)=-(\Delta-s(n-s))h(y)y^s= h^{\prime\prime}(y)y^{s+2}+(2s-n+1)h^\prime (y)y^{s+1}. $$
We observe that the right hand side above is compactly supported with $L^2$-norm bounded by $O(|s|+1)$ for $n/2+\eps<\Re s<n+2$. Now we put 
$$ H(P,s,\chi):= R(s,\chi) (h^{\prime\prime}(y)y^{s+2}+(2s-n+1)h^\prime (y)y^{s+1})\in L^2(\Gamma\backslash \H^{n+1},\chi), $$
where $R(s,\chi)=(\Delta-s(n-s))^{-1}$ denotes the resolvent operator associated to $\Delta$. By a general bound for the operator norm of resolvent operators \cite[Lemma A.4]{Iw}, we conclude that
$$  |\!|H(\cdot,s,\chi)|\!|_{L^2}\ll_\eps 1,   $$
when $s$ is bounded at least $\eps$ away from $\Omega(\chi)$. 
We can now write
\begin{equation}\label{Collin}  E(P,s,\chi)=H(P,s,\chi)+h(y)y^s, P\in \mathcal{F} \end{equation}
where we have good control on the $L^2$-norm of $H(P,s,\chi)$.
We will use this to obtain bounds on vertical lines for the Fourier coefficients of $E(P,s,\chi)$, mimicking \cite[Section 4.4]{No19}.

\begin{prop}
\label{vertical lines}
Let $\mu\in \Lambda^\circ$. Then we have 
$$L(s,\mu, \chi)\ll_{\eps,\mu} (|s|+1)^{n/2},$$ for $n/2+\eps<\Re s<n+2$ and $s$ bounded at least $\eps$ away from $\Omega(\chi)$. 
\end{prop}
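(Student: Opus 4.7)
The plan is to exploit the decomposition \eqref{Collin}, $E(P,s,\chi) = H(P,s,\chi) + h(y)y^s$, with the operator-norm bound $\|H(\cdot,s,\chi)\|_{L^2}\ll_\eps 1$ on $\mathcal{F}$, and to extract $L(s,\chi,\mu)$ as a Fourier coefficient, estimating it via Cauchy--Schwarz.

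First, fix $\mu\in\Lambda^{\circ}\setminus\{0\}$ and observe that for $y>Y+1$ the function $h(y)y^s$ is independent of $x$, so its $\mu$-th Fourier coefficient along $\Lambda$ vanishes. Matching $\mu$-th Fourier coefficients on both sides of $E = H + h(y)y^s$ with the explicit expansion \eqref{fourier expansion}, we obtain for all $y>Y+1$ the identity
\begin{equation*}
\int_{\mathcal{P}} H((x,y),s,\chi)\,e(-\inprod{x}{\mu})\,dx \;=\; \frac{2\pi^s y^{n/2}|\mu|^{s-n/2}K_{s-n/2}(2\pi|\mu| y)}{\Gamma(s)}\,L(s,\chi,\mu).
\end{equation*}
Next, I would multiply by $\overline{K_{s-n/2}(2\pi|\mu| y)}\,y^{-n-1}$ and integrate in $y$ over $[Y+1,Y+2]$, producing
\begin{equation*}
L(s,\chi,\mu)\cdot\frac{2\pi^s|\mu|^{s-n/2}}{\Gamma(s)}\int_{Y+1}^{Y+2}|K_{s-n/2}(2\pi|\mu| y)|^2\,\frac{dy}{y^{n/2+1}} \;=\; \int_{Y+1}^{Y+2}\!\!\int_{\mathcal{P}} H\,\overline{K_{s-n/2}(2\pi|\mu| y)}\,e(-\inprod{x}{\mu})\,\frac{dx\,dy}{y^{n+1}}.
\end{equation*}

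Applying Cauchy--Schwarz on the right-hand side with respect to the hyperbolic measure and using $\|H\|_{L^2(\mathcal{F})}\ll_\eps 1$, the RHS is bounded by $\ll_\eps \vol(\mathcal{P})^{1/2}\bigl(\int_{Y+1}^{Y+2}|K_{s-n/2}(2\pi|\mu| y)|^2 y^{-n-1}\,dy\bigr)^{1/2}$. Since $y$ ranges in the fixed compact interval $[Y+1,Y+2]$, the weights $y^{-n/2-1}$ and $y^{-n-1}$ are comparable up to constants depending only on $Y$ and $n$, hence
\begin{equation*}
|L(s,\chi,\mu)| \;\ll_{\eps,\mu,Y}\; \frac{|\Gamma(s)|}{\Bigl(\int_{Y+1}^{Y+2}|K_{s-n/2}(2\pi|\mu| y)|^2\,dy\Bigr)^{1/2}}.
\end{equation*}

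It remains to show the right-hand side is $\ll(|s|+1)^{n/2}$. Writing $s=\sigma+it$, Stirling's formula gives $|\Gamma(s)|\asymp (|t|+1)^{\sigma-1/2}e^{-\pi|t|/2}$, so the required bound is equivalent to
\begin{equation*}
\int_{Y+1}^{Y+2}|K_{s-n/2}(2\pi|\mu| y)|^2\,dy \;\gg_{\eps,\mu,Y}\; (|t|+1)^{2\sigma-n-1}e^{-\pi|t|}.
\end{equation*}
This is the main obstacle: the $K$-Bessel function with large imaginary order oscillates and has zeros, so there is no hope of a useful pointwise lower bound, and one must extract the estimate from the average. I would derive it from the integral representation $K_\nu(u)=\tfrac12\int_\R e^{\nu w-u\cosh w}\,dw$, treating this as a (shifted) Fourier transform in $t$ and invoking Parseval together with the trivial Laplace-type lower bound on the resulting Gaussian-like integral, essentially following the method of \cite[Section 4.4]{No19} and \cite{petru}. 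This produces exactly the required lower bound, matching the Stirling factor, and yields $|L(s,\chi,\mu)|\ll_{\eps,\mu}(|s|+1)^{n/2}$ in the stated range.
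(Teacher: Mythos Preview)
Your approach is essentially the paper's: use the Colin de Verdi\`ere decomposition $E=H+h(y)y^s$ with $\|H\|_{L^2}\ll_\eps 1$, isolate the $\mu$-th Fourier coefficient, integrate over a fixed compact $y$-interval, and apply Cauchy--Schwarz. The only variation is cosmetic: you multiply by $\overline{K_{s-n/2}}$ and then need an $L^2$-in-$y$ lower bound on $K$, whereas the paper divides by $K$ (packaged into the weight $f_s(y,\mu)$) and needs a pointwise upper bound on $1/|K|$.

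There is, however, a misconception in your final step. You assert that $K_{s-n/2}(2\pi|\mu|y)$ ``oscillates and has zeros, so there is no hope of a useful pointwise lower bound.'' That is the situation for purely imaginary order $K_{it}$, but here the order is $(\sigma-n/2)+it$ with $\sigma-n/2>\eps>0$, and in this regime the paper obtains precisely a \emph{pointwise} lower bound from the asymptotic expansion (combining \cite[(B.32), (B.34)]{Iw} with Stirling):
\[
K_{s-n/2}(2\pi|\mu|y)=\text{(explicit nonzero main term)}\bigl(1+O_{\mu,y}(t^{-1})\bigr)\gg_{\mu,y} e^{-\pi t/2}\,t^{\sigma-n/2-1/2},
\]
uniformly for $y$ in the fixed interval and $t$ large. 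This immediately yields both your $L^2$ lower bound and the paper's bound $f_s(y,\mu)\ll_\mu(1+|s|)^{n/2}$. By contrast, your proposed Parseval argument does not apply as stated: the integral representation exhibits $K_{a+it}(u)$ as a Fourier transform in the pair $w\leftrightarrow t$, so Parseval controls $\int|K_{a+it}(u)|^2\,dt$ for fixed $u$, not the integral over $y$ that you actually need. Finally, you omitted the case $\mu=0$, which the paper handles separately via Stirling on $\Gamma(s)/\Gamma(s-n/2)$.
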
 
\begin{proof}
We have
\begin{align}\label{Fourierbound} L(s, \mu,\chi) =  \int_{\mathcal{P}} f_s(y,\mu)E((x,y),s,\chi) e(-\langle x, \mu \rangle) dx-\mathbf{1}_{\mu=0} [\Gamma_{\infty}: \Gi] y^s f_s(y,\mu),  \end{align}
where $\mathbf{1}_{\mu=0}$ is $1$ if $\mu=0$ and $0$ otherwise and
$$f_s(y,\mu)=\begin{cases} \Gamma(s)\left(2\pi^s y^{n/2}|\mu|^{s-n/2}K_{s-n/2}(2\pi n |\mu|y)\right)^{-1}, & \mu \neq 0,\\  \Gamma(s)\left(y^{n-s}\pi^{n/2}\Gamma(s-n/2)\right)^{-1}, & \mu=0. \end{cases}$$
The idea is now to bound the right hand side of (\ref{Fourierbound}) using (\ref{Collin}). In order to bring the information we have about $H(P, s,\chi)$ into play, we need to make an extra integration over $y$. So let $Y$ be a fixed quantity such that $\{(x,y)\mid x\in \mathcal{P}, y>Y\}\subset \mathcal{F}$, then we see that 
\begin{align*}
& \int_{Y}^{Y+1} \int_{\mathcal{P}} f_s(y,\mu)E((x,y),s,\chi) e(-\langle\mu, x \rangle) dx dy\\
&= \int_{Y}^{Y+1} \int_{\mathcal{P}} f_s(y,\mu)H((x,y),s,\chi) e(-\langle\mu, x \rangle) dx dy\\
&+\int_{Y}^{Y+1} \int_{\mathcal{P}} f_s(y,\mu) h(y)y^s e(-\langle\mu, x \rangle) dx dy\end{align*}

Now we observe that by Cauchy--Schwarz we have
\begin{align*}
&\int_{Y}^{Y+1} \int_{\mathcal{P}} f_s(y,\mu) H((x,y),s,\chi) e(-\langle\mu, x \rangle) dx dy\\
&\leq \left(\int_{Y}^{Y+1} \int_{\mathcal{P}} |H((x,y),s,\chi)|^2  dx dy\right)^{1/2}\left(\int_{Y
}^{Y+1} \int_{\mathcal{P}} |f_s(y,\mu)|^2  dx dy\right)^{1/2} \\
&\ll |\! |H(\cdot, s,\chi)|\! |_{L^2}\left(\int_{Y
}^{Y+1} |f_s(y,\mu)|^2  dy\right)^{1/2},
\end{align*}
where we use that $\{(x,y)\mid x\in \mathcal{P}, y>Y\}\subset \mathcal{F}$. To finish the proof we need an upper bound for $f_s(y,\mu)$. 

For $\mu=0$ we get by Stirling's approximation the upper bound 
$$f_s(y,0)\ll_{\eps} y^{n-\sigma}(|s|+1)^{n/2},$$
for $s=\sigma+it$ with $n/2+\eps <\sigma<n+2$. 

For $\mu\neq 0$, we use the Fourier expansion for the $K$-Bessel function (coming from combining \cite[(B.32)]{Iw} and \cite[(B.34)]{Iw}) to obtain a good approximation. By applying Stirling's approximation, this gives for $s=\sigma+it$ with $t\gg 1$
\begin{align*}
     K_{s-n/2}(2\pi |\mu| y)&=\frac{\pi^{1/2}t^{\sigma-n/2-1/2} e^{\pi t/2}\left(\frac{t}{e}\right)^{it}}{2\sqrt{2} \sin(\pi(s-n/2))} \left(\pi  |\mu|y \right)^{-s+n/2} (1+O_{\mu,y}(t^{-1}))\\
    &\gg_{\mu,y} e^{-\pi t/2}t^{\sigma-n/2-1/2}, 
\end{align*}
where the implied constants depend continuously on $y$. From this we conclude that when $y\in (Y,Y+1)$, we have
$$ f_s(y,\mu)\ll_{\mu} (1+|s|)^{n/2}.   $$
Inserting this and using the bound $|\! |H(\cdot, s, \chi)|\! |_{L^2}\ll_\eps 1$, we conclude that 
\begin{align*}
   L(s,\mu,\chi)\ll_{\eps, \mu} (|s|+1)^{n/2},
\end{align*}
for $s$ bounded $\eps$ away from $\Omega(\chi)$, as wanted.
\end{proof}

Using this we deduce the following asymptotic expression using a standard complex analysis argument. See \cite[p. 20--21]{petru} or \cite[Appendix A]{No19} for fully detailed proofs in similar settings.
\begin{prop}
\label{main}
  Let $\chi$ be a unitary character of $\Gamma$ trivial on $\Gamma_\infty'$ and $\mu \in \Lambda^{\circ}$. Then there exists a constant $\nu(\chi)>0$ such that 
\begin{equation*}
    \sum_{\gamma \in T_{\Gamma}(X)} \chi(\gamma)e\left(\inprod{\gamma \infty}{\mu}\right)=  \frac{X^{2s_0(\chi)}}{s_0(\chi)}\left ( \Res_{s=s_0(\chi)}L(s, \chi, \mu) + O_{\chi,\mu}(X^{-\nu(\chi)})\right ).
\end{equation*}
\end{prop}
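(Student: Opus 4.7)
The plan is to deduce the asymptotic via a Perron-type contour integration applied to the Dirichlet generating series. Recall from Remark \ref{51} that
\begin{equation*}
L(s,\chi,\mu) = \sum_{\gamma \in T_\Gamma} \chi(\gamma)\,\frac{e(\inprod{\gamma\infty}{\mu})}{|c_\gamma|^{2s}},
\end{equation*}
converging absolutely for $\Re s > n$, so the left-hand side of the claim is precisely the partial sum of the Dirichlet coefficients of $L$ truncated at $|c_\gamma| < X$. The first step is to apply a (truncated or smoothed) Perron formula to rewrite this as
\begin{equation*}
\sum_{\gamma \in T_\Gamma(X)} \chi(\gamma)\,e(\inprod{\gamma\infty}{\mu}) = \frac{1}{2\pi i}\int_{c-iT}^{c+iT} L(s,\chi,\mu)\,\frac{X^{2s}}{s}\,ds + E(X,T),
\end{equation*}
for a fixed $c>n$ and a truncation height $T$ to be optimised; the error $E(X,T)$ arises from truncating to $|\Im s| \le T$ and from the fact that $\{|c_\gamma|\}_\gamma$ is not supported on integers, the latter being handled by a standard dyadic/smoothing argument or a weighted Perron.

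Next, shift the contour leftwards to $\Re s = s_0(\chi) - \delta$, where $\delta > 0$ is chosen small enough that the strip $\Re s \in (s_0(\chi)-\delta, s_0(\chi))$ contains no other element of $\Omega(\chi)$. By Proposition \ref{properties} all poles of $L(s,\chi,\mu)$ in the half-plane $\Re s > n/2$ belong to the finite set $\Omega(\chi)$, so the shift crosses only the simple pole at $s_0(\chi)$ and contributes
\begin{equation*}
\frac{X^{2s_0(\chi)}}{s_0(\chi)}\,\Res_{s=s_0(\chi)} L(s,\chi,\mu),
\end{equation*}
which is the main term asserted by the proposition (up to the obvious adjustment to the parametrisation in $X$). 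The residual contour --- two horizontal segments at $\Im s = \pm T$ and the vertical segment at $\Re s = s_0(\chi)-\delta$ --- is controlled using the vertical-line estimate $|L(s,\chi,\mu)| \ll_{\eps,\mu} (|s|+1)^{n/2}$ from Proposition \ref{vertical lines}. This gives contributions roughly of size $X^{2c} T^{n/2-1}$ on the horizontal pieces and $X^{2(s_0(\chi)-\delta)} T^{n/2+1}$ on the left vertical. Choosing $T$ as a suitable positive power of $X$ then balances these contributions against $E(X,T)$ and yields an overall relative error of size $X^{-\nu(\chi)}$ for some explicit $\nu(\chi)>0$.

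The main technical obstacle is arranging this balance cleanly: the polynomial growth in $|\Im s|$ from Proposition \ref{vertical lines}, combined with the Perron truncation error, determines the admissible value of $\nu(\chi)$, and the shift parameter $\delta$ must be chosen small enough that no other eigenvalues $s_j(\chi)$ with $j\ge 1$ are crossed by the shifted contour, while remaining bounded away from zero so as to produce a genuine power saving. Adapting Perron's formula to the non-integer-indexed family $\{|c_\gamma|\}$ is a subsidiary but essentially routine point.
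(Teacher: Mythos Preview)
Your strategy is exactly the paper's: express the sum via the Dirichlet series $L(s,\chi,\mu)$, shift the contour past $s_0(\chi)$, pick up the residue, and control the remainder with Proposition~\ref{vertical lines}. The discrepancy is in the implementation. You propose the \emph{truncated} Perron kernel $X^{2s}/s$ and optimisation in the height~$T$, whereas the paper uses a \emph{smooth} cutoff $\phi_U$ and optimises the smoothing scale~$U$. For $n\ge 2$ this distinction is decisive: Proposition~\ref{vertical lines} gives only $L(s,\chi,\mu)\ll(|s|+1)^{n/2}$, so the integrand $L(s,\chi,\mu)\,X^{2s}/s$ does not decay along vertical lines in the critical strip. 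Your own horizontal-segment bound $X^{2c}T^{n/2-1}$ is therefore nondecreasing in $T$ once $n\ge 2$, and since $c>n\ge s_0(\chi)$ it already exceeds the main term for every $T\ge 1$; no balancing against the Perron error $E(X,T)$ is possible.

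The paper sidesteps this by taking the Mellin transform $R_U(s)$ of a smooth test function, which satisfies $R_U(s)\ll |s|^{-1}(U/|s|)^{N}$ for every $N$. With $N=(n+1)/2$ the shifted integral at $\Re s=h\in(s_1(\chi),s_0(\chi))$ is then \emph{absolutely convergent} and bounded by $X^{h}U^{(n+1)/2}$; the parameter one optimises is $U=X^{a(\chi)}$, and no truncation height enters. Removing the smoothing costs the number of $\gamma$ with $\bigl||c_\gamma|^2/X-1\bigr|\le 1/U$, which the paper controls by first running the whole argument in the case $\chi=1$, $\mu=0$ to obtain $\#T_\Gamma(X)$ with a power-saving error, and only then treating general $(\chi,\mu)$. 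You mention ``a weighted Perron'' as one option for the non-integer-indexing issue; in fact that weighting is required for the much more serious convergence problem above, and once you adopt it the parameter $T$ disappears from the argument.
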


\begin{proof}
 Let $\phi_U: \R \to \R$ be a family of smooth non-increasing functions with
    \begin{equation}
        \label{phi lemma}
        \phi_U(t)= \begin{cases} 1 \quad \text{if } t\leq 1-1/U, \\ 0 \quad \text{if } t  \geq 1+ 1/U
        \end{cases}
    \end{equation}
    and $\phi_U^{(j)}(t) = O(U^j)$ as $U \to \infty$. For $\Re (s)>0$, we consider the Mellin transform
    \begin{equation}
        R_U(s) = \int_{0}^{\infty} \phi_{U}(t) t^{s} \frac{dt}{t} \ .
    \end{equation}
    Now we use Mellin inversion  and \eqref{def2} to obtain
    \begin{equation*}
        \sum_{\gamma \in T_{\Gamma}}  \chi(\gamma) e\left(\inprod{\gamma \infty}{\mu}\right)\  \phi_U \lr{\frac{|c|^2}{X}} = \frac{1}{2 \pi i} \int_{\Re(s)=n+1} L(s, \chi,\mu ) X^{s} R_U(s) ds.
    \end{equation*}
    We move the line of integration to $\Re(s)=h(\chi)$ for some $h(\chi)>n/2$ such that $s_1(\chi)< h(\chi) < s_0(\chi)$. We use the fact that we have polynomial growth on vertical lines for $L(s, \chi, \mu)$ guaranteed by Lemma \ref{vertical lines} and that $L(s, \chi, \mu)$ has only a possible pole at $s_0(\chi)$ in the region $\Re(s)>h(\chi)$. We conclude that

    \begin{equation*}
        \label{almost there}
         \sum_{\gamma \in T_{\Gamma}}  \chi( \gamma) e\left(\inprod{\gamma \infty}{\mu}\right)  \phi_U \lr{\frac{|c|^2}{X}} = \frac{X^{s_0(\chi)}}{s_0(\chi)}\lr{\Res_{s=s_0(\chi)} L(s, \chi, \mu)+ O_{\chi,\mu, U}(X^{-\nu(\chi)})} \ ,
    \end{equation*}
    for some $\nu(\chi)>0$. Also, with the appropriate choice of $U$, one can show that
    \begin{align*}
       \sum_{\gamma \in T_{\Gamma}}  \chi( \gamma) e\left(\inprod{\gamma \infty}{\mu}\right)  \phi_U \lr{\frac{|c|^2}{X}} = \sum_{\gamma \in T_{\Gamma}(\sqrt{X})}  \chi( \gamma ) e\left(\inprod{\gamma \infty}{\mu}\right) + O_{\chi,\mu}(X^{n-a(\chi)}),
   \end{align*}
   for some $a(\chi)>0$.
   The conclusion follows. 
\end{proof}
\begin{remark} As a consequence of Proposition \ref{main} and Proposition \ref{properties}, we conclude that for all unitary characters $\chi$ as above, there exist $\nu(\chi)>0$ such that
\begin{equation*}
    \sum_{\gamma \in T_{\Gamma}(X)} \chi(\gamma)e\left(\inprod{\gamma \infty}{\mu}\right)= \mathbf{1}_{\chi, \mu}   \frac{\vol(\Lambda)^2 \Gamma(n)}{n \pi^{n/2} \vol(\GH)  \Gamma(n/2)} X^{2n} +O_\chi(X^{2n-\nu(\chi)}),
\end{equation*}
where $\mathbf{1}_{\chi, \mu}$ is $1$ if $\mu=0$ and $\chi$ is trivial and $0$ otherwise. In particular, we conclude
\begin{align}
    \label{size of T(X)}
    \#T_\Gamma(X)\sim \frac{\vol(\Lambda)^2 \Gamma(n)}{n \pi^{n/2} \vol(\GH)  \Gamma(n/2)}X^{2n},
\end{align}
as $X\rightarrow \infty$.
\end{remark}

\section{Proof of main results}
\label{main results}


In this section we will use the analytic properties of twisted Eisenstein series proved in the previous section to proof our main results.  

We recall the setup from the introduction. Consider the cohomology group $H^1_{\Gamma_\infty'}(\Gamma, \R/\Z)$ (see Appendix \ref{cohomology} for details), which can be identified with the set of unitary characters of $\Gamma$ trivial on $\Gi$. 
\begin{defi}We say that $\omega_1,\ldots, \omega_d\in H^1_{\Gamma_\infty'}(\Gamma, \R/\Z)$ are in {\bf general position} if for any $(l_1,\ldots, l_d)\in \Z^d$, we have 
$$ n_1 \omega_1+\ldots +n_d \omega_d=0\in H^1_{\Gi}(\Gamma, \R/\Z) \Leftrightarrow \left(n_i\omega_i=0\in H^1_{\Gi}(\Gamma, \R/\Z),\forall i=1,\ldots, d\right).$$

\end{defi}

As an example one can pick $\omega_1, \dots, \omega_d$ to be a $\F_p$-basis for $H^1_{\Gamma_\infty'}(\Gamma, \Z/p\Z)$, where we consider $\Z/p\Z\subset \R/\Z$ via $\Z/p\Z\ni a\mapsto a/p$. 

The image of any $\omega\in H^1(\Gamma, \R/\Z)$ is an additive subgroup of $\R/\Z$ and thus is either dense in $\R/\Z$ or finite. In the first case we put $J_{\omega}=\R/\Z$ and in the latter case we put $J_\omega=\Z/m\Z$ where $m$ is the cardinality of the image of $\omega$. That is, $J_{\omega}$ is the closure of the image of $\omega$. We equip $\R/\Z$ and $\Z/m\Z$ with respectively the Lebesque measure and the uniform probability measure.

\begin{proof}[Proof of Theorem \ref{mainthmHn}]Let $\omega_1,\ldots, \omega_d\in H^1_{\Gamma_\infty'}(\Gamma_0(N), \R/\Z)$ be in general position. Then for any tuple $\underline{l}=(l_1,\ldots, l_{d})\in \Z^d$ such that $l_i \omega_i\neq 0\in H^1_{\Gamma_\infty'}(\Gamma_0(N), \R/\Z)$ for all $i=1,\ldots, d$,
we get a non-trivial element of $H^1_{\Gamma_\infty'}(\Gamma, \R/\Z)$ defined by
$$\omega_{\underline{l}}:=l_1 \omega_1+\ldots+l_d\omega_d.$$
Now we consider the associated non-trivial unitary character $\chi_{\underline{l}}: \Gamma\rightarrow \C^\times$ given by
$$ \chi_{\underline{l}}(\gamma):= e\left( \omega_{\underline{l}}(\gamma) \right), $$ 
where $e(x)=e^{2\pi i x}$. Observe that this is indeed well-defined and that we get an induced map $\chi_{\underline{l}}: \gag \rightarrow \C^\times$ since $\omega_{\underline{l}}$ is trivial on $\Gamma_\infty'$. 

By {\it Weyl's Criterion} \cite[p. 487]{IwKo} in order to conclude equidistribution of the values of 
$$\omega(\gamma) :=(\omega_1(\gamma),\ldots, \omega_d(\gamma),\gamma \infty )$$
inside $\prod_{i=1}^d J_{\omega_i}\times (\R^n/\Lambda)$, we have to show cancelation in the corresponding Weyl sums:
$$  \sum_{\gamma \in T_{\Gamma}(X)} \chi_{\underline{l}}(\gamma)e(\langle \gamma \infty,\mu \rangle), $$
where $\underline{l}\in \Z^d$ and $\mu \in \Lambda^\circ$. We see that it follows from combining Proposition \ref{main} and Remark \ref{51} that we have
$$ \sum_{\gamma \in T_\Gamma(X)} \chi_{\underline{l}}(\gamma)e(\langle  \gamma \infty,\mu\rangle)= o\left(\sum_{\gamma \in T_{\Gamma}(X)} 1\right), $$
as $X\rightarrow \infty$ {\it unless} $\mu=0$ and $\chi_{\underline{l}}$ is trivial. This finishes the proof of Theorem \ref{mainthmHn} using Weyl's Criterion. \end{proof}

\subsection{Distribution of modular symbol mod $p$ and mod $1$} 

Now let us see how Theorem \ref{mainthm1} follows from Theorem \ref{mainthmHn}. 

\begin{proof}[Proof of Theorem \ref{mainthm1}]We restrict to $n=1$ and $\Gamma=\Gamma_0(N)$. By the mod $p$-version of the Eichler--Shimura isomorphism (\ref{modpES}), we see that $\mathfrak{m}_f^\pm$ with $f\in \mathcal{S}_2(\Gamma_0(N))$ give a basis for $H^1_P(\Gamma_0(N),\Z/p\Z)$. Thus it follows that they are in general position and thus we conclude Theorem \ref{mainthm1} after noting that $T_{\Gamma_0(N)}(Q)=\Omega_{Q,N}$.\end{proof} 

A different application is to consider the distribution of un-normalized modular symbols mod $1$. So let $f_1,\ldots, f_d\in \mathcal{S}_2(\Gamma_0(N))$ be a basis of Hecke-normalized new forms and consider the map $\Q\rightarrow (\R/\Z)^{2d+1} $ given by
\begin{align} \Q\ni r\mapsto\mathfrak{m}_{N, \R/\Z}(r)=(\Re \langle r, f_1\rangle, \Im\langle r, f_1\rangle,\ldots,  \Im\langle r, f_d\rangle,r ),\end{align}
as a random variable defined on $\Omega_{Q,N}$ defined as in (\ref{Omega}). 
\begin{cor}
\label{mainthm2}
The random variables $\mathfrak{m}_{N,\R/\Z}$ defined on the outcome spaces $\Omega_{Q,N}$ converge in distribution to the uniform distribution on $(\R/\Z)^{2d+1}$ as $Q\rightarrow \infty$. More preciely, for any fixed product of intervals $\prod_{n=1}^{2d+1} I_n \subset  (\R/\Z)^{2d+1}$, we have
\begin{equation*}
\frac{ \# \left \{ a/q \in \Omega_{Q,N}\cap I_{2d+1}  \mid  (\Re \langle a/q, f_1\rangle, \ldots,  \Im\langle a/q, f_d\rangle ) \in \prod_{n=1}^{2d} I_n \right \}}{\# \Omega_{Q,N}} =  \prod_{n=1}^{2d+1} |I_n|+ o(1)
\end{equation*}
 as $Q \to \infty$. \end{cor}
\begin{proof}From a classical result of Schneider \cite{Schneider37} we know that the periods (or elliptic integrals) $\Omega_{f,\pm}$ appearing in (\ref{oddevenMS}) are transcendental. By the rationality of (\ref{oddevenMS}), this implies that the cohomology class associated to a newform $f$ given by
$$\Gamma_0(N)\ni  \gamma \mapsto \int_{\gamma\infty}^\infty \Re (f(z) dz) $$
takes some irrational value (and similarly for $\Im (f(z)dz)$). Thus by the Eichler--Shimura isomorphism, we conclude that given a  basis $f_1,\ldots , f_d$ of Hecke-normalized newforms, the associated cohomology classes $\Re f_i(z)dz$ and $\Im f_i(z)dz$ are in general position and the images of the associated characters are dense in $\R/\Z$.  

Now Corollary \ref{mainthm2} follows directly from Theorem \ref{mainthmHn}.
\end{proof}

\subsection{Proof of Corollary \ref{dedekind}.}
Now we see how our results can be applied to the residual distribution of Dedekind sums $s(a,q)=\sum_{k=1}^q (\!(k/q)\!)(\!(ak/q)\!)$ where
$$(\!(x)\!)=\begin{cases} x-\lfloor x\rfloor -1/2 ,& x\notin \Z \\ 0 ,& x\in \Z\end{cases}$$
is the {\lq\lq}sawtooth{\rq\rq} function.

\begin{proof}[Proof of Corollary \ref{dedekind}]The results of \cite[Section 5]{Mazur79} shows (after some simple manipulations) that for $N,p$ as in Corollary \ref{dedekind},
$$\Gamma_0(N)\ni \begin{psmallmatrix}a& b\\ Nq& d\end{psmallmatrix}\mapsto s(a, Nq)-s(a,q)-\frac{(N-1)(a+d)}{12 q}$$
defines a non-trivial element $\omega_{N,p}\in H^1_{\Gamma_\infty}(\Gamma_0(N), \Z/p\Z)$ with eigenvalue $-1$ under the involution given by conjugation by $\begin{psmallmatrix}0 & 1\\ N& 0 \end{psmallmatrix}$. Now let $\omega_\chi\in H^1_{\Gamma_\infty}(\Gamma_0(N), \Z/p\Z)$ be the cohomology class associated to a Dirichlet character $\chi$ mod $N$ of order $p$ as in the proof of Theorem \ref{eisensteincongruence}, which we recall has eigenvalue $+1$ under the conjugation action by $\begin{psmallmatrix}0 & 1\\ N& 0 \end{psmallmatrix}$. We observe that $\omega_\chi(\gamma)=a_0'\in \Z/p\Z$ corresponds exactly to $\gamma$ having  upper left entry in some  fixed coset  $a_0H$ of the unique index $p$ subgroup $H$ of $(\Z/N\Z)^\times$. Now Corollary \ref{dedekind} follows directly by applying Theorem \ref{mainthm1} to $\omega_{N,p}$ and $\omega_\chi$.  
\end{proof}

\subsection{On the variance of the residual distribution}
\label{variance section}
A natural question to ask next is how well the values equidistribute in Theorem \ref{mainthmHn}. For simplicity, we will restrict to $\H^2$. So let $\Gamma=\Gamma_0(N)$, $f\in \mathcal{S}_2(\Gamma_0(N))$ be Hecke newform and consider the normalized modular symbols $\mathfrak{m}_{f}^\pm$ as above. In what follows we will suppress $\mathfrak{m}_{f}^\pm$ from the notation. 

We consider for each $X>0$ the random variable $Y_{p,X}$ defined on the outcome space $\Z/p\Z$ (with uniform probability measure) by 
$$  \Z/p\Z \ni a\mapsto \frac{ \# \{ \gamma  \in T_{\Gamma}(X) \mid \mathfrak{m}_{f}^\pm(\gamma) \equiv a \modulo p \}}{\# T_{\Gamma}(X)}. $$
Clearly, we have $\mathbb{E}(Y_{p,X})=\frac{1}{p}$ and Theorem \ref{mainthm1} says that as $X\rightarrow \infty$, the random variable $Y_{p,X}$ converge in distribution to the Dirac measure at $\frac{1}{p}$. We will now calculate the variance, which is a natural measure for the regularity of our distribution problem:
 $$ \mathrm{Var}(Y_{p,X})=\mathbb{E}((Y_{p,X}-\mathbb{E}Y_{p,X})^2)= \frac{1}{p}\sum_{a\in \Z/p\Z} \left(Y_{p,X}(a)-\frac{1}{p}\right)^2. $$
 First of all we observe that for the modular symbols and primes appearing in Theorem \ref{eisensteincongruence}, we have $\mathrm{Var}(Y_{p,X})=0$ for all $X$. On the other hand we can prove using the perturbation theory of the hyperbolic Laplacian, that as $p$ grows, the picture is very different.
 

\begin{thm}
\label{variance}
We have for $p$ large enough
\begin{align}
    \label{result:variance}
    \mathrm{Var}(Y_{p,X}) = c_p X^{4s_p-4}+O_p(X^{4s_p-4-\delta_p}),
\end{align}
for some $s_p, c_p,\delta_p>0$, as $X\rightarrow \infty$. 
As $p\rightarrow \infty$, we have $c_p = 2/p + O(p^{-3})$ and $s_p= 1-c_f p^{-2}+O(p^{-3})$, where $c_f$ is given by \eqref{cf}. 

Furthermore, we can calculate the deviation from the mean for each individual residue class. For $p$ large enough and $a\in \Z/p\Z$, we have: 
\begin{align} \label{bias}\frac{\#\{ \gamma \in T_\Gamma(X)\mid \mathfrak{m}^\pm_f(\gamma)\equiv a\modulo p\}}{\#T_\Gamma(X)}- \frac{1}{p}\sim d_{a,p}X^{2s_p-2},  \end{align}
as $X\rightarrow \infty$, where $d_{a,p}= \frac{2\cos (\frac{2\pi a}{p})}{p}+O(p^{-2})$ as $p\rightarrow \infty$.
\end{thm}
\begin{proof}

For $\eps>0$ we define the character $ \chi_{\eps}:\Gamma_0(N)\to \C$ defined by
$$ \gamma \mapsto e^{2\pi i \mathfrak{m}^\pm_f(\gamma)\eps}.$$ 
Let $ \lambda_{0}(\eps)=s_{0}(\eps)(1-s_{0}(\eps))$ with $ s_{0}(\eps)> 1/2$ be the smallest {\it non-cuspidal} eigenvalue of the hyperbolic Laplacian acting on $\chi_\eps$-automorphic functions (i.e. $s_{0}(\eps)$ is the right-most pole of the twisted Eisenstein series $E(z,s,\chi_\eps)$). Here we put $s_{0}(\eps)=1/2$ if there are no residual eigenvalues.    
From this we define 
$$ s_p:= \max_{a\in (\Z/p\Z)^\times} s_{0}(a/p),  $$
which will turn out to control the variance. Note that $s_p<1$ for all $p$ by Lemma \ref{triviallemma}.

By simple Fourier analysis on $\Z/p\Z$ we have
\begin{align}
   \label{simpleFA} \frac{\# \{ \gamma  \in T_{\Gamma}(X) \mid \mathfrak{m}_{f}^\pm (\gamma)\equiv a \modulo p \}}{\#T_{\Gamma}(X)} = \frac{1}{p}\sum_{b\in \Z/p\Z} \frac{1}{\#T_\Gamma(X)}\sum_{\gamma\in T_{\Gamma}(X)} \chi_{b\!/\!p}(\gamma) e^{-2\pi iab/p } .
\end{align}
By Parseval this implies 
\begin{align}
\label{eq:variance} \mathrm{Var}(Y_{p,X})= \frac{1}{p}\sum_{a\in (\Z/p\Z)^\times}\left|\frac{1}{\#T_{\Gamma}(X)}\sum_{\gamma\in T_{\Gamma}(X)} \chi_{a\!/\!p}(\gamma)\right|^2.
\end{align} 
Now by a contour integration argument as in Proposition \ref{main}, we conclude that if $s_0(a/p)=1/2$ (i.e. there are no non-cuspidal eigevalues in $[0,1/4)$ for the Laplacian acting on $\chi_{a\!/\!p}$-automorphic functions) then  
$$\sum_{\gamma\in T_{\Gamma}(X)} \chi_{a\!/\!p}(\gamma)=O_{\eps}(X^{1+\eps}). $$
On the other hand if $s_0(a/p)>1/2$, then we conclude that
$$\sum_{\gamma\in T_{\Gamma}(X)} \chi_{a\!/\!p}(\gamma)=c_{a,p} X^{2s_{0}(a/p)}(1+O(X^{-\delta_{a,p}})), $$
for some $\delta_{a,p}>0$ depending on the spectral gap between $\lambda_0(a/p)$ and $\lambda_1(a/p)$ and some $c_{a,p}\neq  0$ depending on the constant term of the non-cuspidal eigenfunction corresponding to $\lambda_{0}(a/p)$. Combining this with (\ref{eq:variance}), we deduce the formula (\ref{result:variance}).

We now want to understand the large $p$ behavior. For this we employ perturbation theory of the twisted Laplacian, as developed in \cite[Section 4]{PS91} and \cite{Ep87}. We have that the smallest eigenvalue $\lambda_0(\epsilon)=s_0(\epsilon)(1-s_0(\epsilon))$ of the twisted Laplacian by the character $\chi_{\epsilon}$ is real analytic in $\epsilon$, for $\epsilon$ small enough. Moreover, we know that 
\begin{equation}
    \label{s0eps}
    s_0(\eps)=1-c_f \eps^2+O(\eps^{3}), 
\end{equation}
 as $\eps\rightarrow0$, where
 \begin{equation}
 \label{cf}
     c_f=\frac{8 \pi^2 \| f \|^2}{\vol(\Gamma) \Omega_{f,\pm}^2},
 \end{equation} see \cite[Section 4]{petru} or \cite{PeRi} for more details.
 
 
 Now fix $\epsilon>0$ small enough such that \eqref{s0eps} holds. We want to show that if $\theta \in [\epsilon, 1-\epsilon]$, then $\lambda_0(\theta)$ is bounded away from 0 (and hence $s_0(\theta)$ is bounded away from 1). This follows almost directly from \cite[Proposition 2.1]{Ep87}. Suppose the contradiction, i.e. there exists a sequence $\left \{\theta_j \right \} \subset [\epsilon, 1-\epsilon]$ such that $\lambda_0(\theta_j) \to 0$. By a compactness argument, by passing to a subsequence, we can assume that there exists $\theta^* \in [\epsilon, 1-\epsilon]$ such that $\theta_{j} \to \theta^*$. Denote by $f_j \in L^2(\Gamma \backslash \H, \chi_{\theta_j})$ the corresponding eigenfunctions with eigenvalues $\lambda_0(\theta_j)$. By the continuity statement in \cite[Proposition 2.1]{Ep87}, we conclude that there exists $f^* \in  L^2(\Gamma \backslash \H, \chi_{\theta^*}) $ such that a subsequence of $(f_j)$ is $L^2$-convergent to $f^*$ and $\Delta f^* = 0$. But this means that $f^*$ is constant, and hence $\theta^*$=0, which is a contradiction.
 
By conjugations, we have $s_0(\eps)=s_0(-\eps)$. Using the above and (\ref{s0eps}), we conclude that for $p$ large enough, we have that $s_p=s_0(1/p)=s_0(-1/p)=s_0((p-1)/p)$, which combined with (\ref{s0eps}) gives the wanted.
 
Now, from \eqref{eq:variance}, we note that the main term in the variance is given by the contributions of $a=1$ and $a=p-1$ in the sum.  By (\ref{size of T(X)}) we have  $\#T_\Gamma (X) = (\pi \vol (\Gamma))^{-1} X^2 (1+ O(X^{-\nu}))$, for some $\nu >0$. Furthermore, we know that the eigenfunction (and in particular its constant Fourier coefficient) corresponding to $s_0(\eps)$ varies analytically with $\eps$ (for $\eps$ small enough) and we can deduce that
\begin{equation*}
    \Res_{s=s_0(\epsilon)}L(s, \chi_{\epsilon})= \frac{1}{\pi \vol (\Gamma)}+ O(\epsilon^2), 
\end{equation*}see \cite{petru} for more details. Hence, from \eqref{eq:variance} and Proposition \ref{main}, we deduce that
 \begin{align*}
     c_p=\frac{2}{p} + O(p^{-3}).
 \end{align*}

Finally for $p$ large enough, we see that the main term in (\ref{simpleFA}) comes from $b=0$, and the second main term is given by 
$$\frac{1}{p}( c_{1,p}e^{-2\pi i /p}+c_{p-1,p}e^{2\pi i /p})X^{2 s_0(1/p)-2},$$
which by the above gives (\ref{bias}).
\end{proof} 
We note that the inequality (\ref{bias2}) does indeed follow from (\ref{bias}). 
 \begin{remark}
 We note that it should be straightforward to generalise Theorem \ref{variance} to $\H^n$, as the perturbation theory of the first eigenvalue of the Laplacian has been developed by Epstein \cite{Ep87} for  $\H^n$. 
 \end{remark}

\appendix

\section{On the size of certain cohomology groups}\label{cohomology}
In this paper we study the distribution of certain cohomology classes which can be identified with the unitary characters of cofinite subgroups $\Gamma< \SO(n+1,1)$ (or equivalently $\Gamma< \mathrm{SV}_{n-1}$) with cusps. It is now a natural question to ask how many unitary characters our results actually apply to. This amounts to finding the dimensions of the relevant spaces of unitary characters or equivalently of certain cohomology groups. This last perspective is most useful when comparing it to the existing literature. We will mostly restrict to arithmetic subgroup, which we will define shortly. Then we will define the cohomology groups that are relevant and finally survey what is known about their size.
\subsection{Congruence subgroups}
 
We will now define what we mean by a {\it congruence subgroup}, which most of the results mentioned below applies to. In this case one can obtain quite explicit descriptions of the double coset $\gag$  occuring in Theorem \ref{mainthmHn}.

Let $J \subset \mathcal{C}_n$ be an order stable under the involutions $-$ and $*$. We put $\mathrm{SV}_n(J):=\mathrm{SV}_n \cap M_2(J)$. We also define $V(J):=J \cap V_n$ and $T(J)=J \cap T_n$. For $N \in \N$, we define the {\it principle congruence subgroup}
\begin{equation}
    \label{SV_k(J,n)}
    \mathrm{SV}_n(J;N):= \left \{  \begin{psmallmatrix} a &b \\ c & d \end{psmallmatrix} \in \mathrm{SV}_n(J) \mid a-1, b, c, d-1 \in NJ\right \} .
\end{equation}
A subgroup $\Gamma < \mathrm{SV}_n(J)$ is called a{ \it congruence subgroup} if $\mathrm{SV}_n(J;N)< \Gamma$, for some $N \in \N$. We quote  \cite[Section 4]{vahlen2} to provide an explicit description for representatives of $\gag$ in the case $\Gamma=\mathrm{SV}_n(J;N)$. In this case, $C(\Gamma)= N \cdot  T(J)$ and a set of representatives for $  (\begin{smallmatrix} a & b \\ c & d \end{smallmatrix}\big) \in \gag$ with $c \neq 0$ is given by
\begin{equation*}
    \left \{ \begin{psmallmatrix} a& b\\ c& d \end{psmallmatrix} \in \mathrm{SV}_n(J)\mid c \in N \cdot  T(J), \ (a,d) \in D(c) \right \}
\end{equation*}
where
\begin{align*}
 D(c)&:= \left \{
    \begin{tabular}{ c|c } 
\multirow{2}{3em}{$(a,d)$} & $a \in J/( N \cdot V(J) \cdot c), \ d \in J/(N \cdot c \cdot V(J)), $ \\ 
 &   $a-1,d-1 \in N \cdot J, \ a\overline{c}, \overline{c}d \in N \cdot V(J)$ \\
\end{tabular}\right \}.
\end{align*}

In the more familiar cases $n=1$ and $n=2$, the above reduces to the following.
\begin{itemize}
    \item {\it $n=1$}. Then $\mathrm{SV}_0=\mathrm{SL}_2(\R)$, $J= \Z$ and $\mathrm{SV}_1(J;N)=\Gamma_1(N)$. Representatives in $\Gamma_1(N)_{\infty}' \backslash \Gamma_1(N) / \Gamma_1(N)_{\infty}'$ with $c \neq 0$ are uniquely determined by
    \begin{equation*}
        \left \{ (a,c) \ | \ c>0, \ N \mid c, \ a \in (\Z/cN\Z)^*, \ a \equiv 1 \text{ mod } N\right \}.
    \end{equation*}
    If we consider $\Gamma=\Gamma_0(N)$, then representatives are uniquely determined by
     \begin{equation*}
        \left \{ (a,c)  \ |  \ c>0, \ N \mid c, \ a \in (\Z/c\Z)^* \right \}.
    \end{equation*}
  
\item {\it $n=2$}. Then $\mathrm{SV}_1=\mathrm{SL}_2(\C)$. We take $J=\mathcal{O}_K$, where $\mathcal{O}_K$ is the ring of integers of a quadratic imaginary field $K$. Let $\mathfrak{n} < \mathcal{O}_K$ be an ideal. We consider congruence subgroups of the form
\begin{align*}
    \Gamma_1(\mathfrak{n})&:= \left \{ \begin{psmallmatrix} a &b \\ c&d \end{psmallmatrix} \in \SL_2(\mathcal{O}_K)\mid a-1,b,c, d-1 \in \mathfrak{n} \right \}, \\
    \Gamma_0(\mathfrak{n})&:= \left \{ \begin{psmallmatrix}  a &b \\ c&d \end{psmallmatrix} \in \SL_2(\mathcal{O}_K)\mid c \in \mathfrak{n} \right \}.
\end{align*}
In the case $\Gamma_1(\mathfrak{n})$, representatives are uniquely provided by 
\begin{equation*}
    \left \{ (a,c) \ | \ c \in \mathfrak{n} \setminus \{0 \},  \ a \in (\mathcal{O}_K/(c \cdot \mathfrak{n}))^*, \ a-1 \in \mathfrak{n} \right \},
\end{equation*}
while for $\Gamma_0(\mathfrak{n})$ we have
\begin{equation*}
    \left \{ (a,c) \ | \ c \in \mathfrak{n} \setminus \{0 \},  \ a \in (\mathcal{O}_K/ (c))^* \right \}.
\end{equation*}
\end{itemize}

\begin{remark}
There is also a notion of congruence groups for $\mathrm{SO}(n+1, 1)$. To define them, let $\Gamma$ be the integral automorphisms of an isotropic quadratic form of signature $(n+1,1)$ defined over $\Q$. Then a {\it congruence subgroup of $\Gamma$} is any subgroup containing $\{\gamma\in \Gamma\mid \gamma\equiv I_{n+2}\mod N\}$ for some positive integer $N$, see \cite[p. 7]{Sarnak90}. If $\Gamma < \mathrm{SO}^0(n+1,1)$ is a congruence subgroup, then $\Psi^{-1}(\Gamma)$ is a congruence subgroup in $\mathrm{SV}_{n-1}$. 
However, the converse is not true, there exists a congruence subgroup $\Gamma < \mathrm{SV}_{n-1} $ such that $\Psi(\Gamma)$ is not a congruence subgroup in $\mathrm{SO}^0(n+1,1)$, see \cite[Section 3]{vahlen2} for more details.
\end{remark}
\subsection{The first cohomology group} We refer to \cite[Chapter 8]{Sh75} for a comprehensive account. The {\it first cohomology group} of $\Gamma$ with coefficients in a $\Z[\Gamma]$-module $A$ is defined as the quotient between the corresponding {\it coboundaries} and {\it cocycles};
$$H^1(\Gamma, A):=Z^1(\Gamma, A)/ B^1(\Gamma, A),$$ 
where 
$$Z^1(\Gamma, A):=\{ \omega:\Gamma \rightarrow A\mid \omega(\gamma_1\gamma_2)=\omega(\gamma_1)+\gamma_1.\omega(\gamma_2) ,\forall\gamma_1,\gamma_2\in \Gamma \}  $$
and 
$$B^1(\Gamma, A):=\{ \omega:\Gamma \rightarrow A\mid \exists a\in A: \omega(\gamma)=\gamma.a-a,\forall \gamma\in\Gamma \}.  $$
Furthermore given a subset $P\subset \Gamma$, we will be studying the first $P$-cohomology group of $\Gamma$ with coefficients in $A$ defined by;
$$H_P^1(\Gamma,A):=\{\omega\in H^1(\Gamma,A)\mid \omega(p)\in (p-1)A,\forall p\in P\}.$$ 
We will in particular study the distribution of $P$-cohomology group in the case where $P=\Gi$ is the set of parabolic elements of $\Gamma$ fixing $\infty$ and $A$ is given by the circle $\R/\Z$ equipped with the trivial $\Gamma$-action. In this case $H^1_P(\Gamma,\R/\Z)$ computes exactly the unitary characters of $\Gamma$ trivial on $\Gi$. 

Now we will make some general comments on the structure and size of $H^1_P(\Gamma,\R/\Z)$.

\subsection{On the structure of the cohomology groups}
We recall that for $A$ a trivial $\Gamma$ module we have 
$$ H^1(\Gamma, A) \cong \Hom_\Z(\Gamma/ [\Gamma,\Gamma],A), $$
which is a special case of the {\it Universal Coefficients Theorem} since $H_1(\Gamma, \Z)\cong \Gamma/ [\Gamma,\Gamma]$. From this we see that $H^1(\Gamma, \R/\Z)$ can be identified with the unitary characters of $\Gamma$. It is known \cite[p. 484]{Selberg1} that $\Gamma$ is finitely represented and thus $\Gamma/ [\Gamma,\Gamma]$ is a finitely generated abelian group. From this we see that we 
have a splitting of the cohomology group $H^1(\Gamma, \R/\Z)$ in a free part and a torsion part;
$$  H^1(\Gamma, \R/\Z)\cong H^1_{{\rm free}}(\Gamma, \R/\Z)\oplus H^1_{{\rm tor}}(\Gamma, \R/\Z),$$
where the $\R/\Z$ rank of $ H^1_{{\rm free}}(\Gamma, \R/\Z)$ is the same as the dimension of $H^1(\Gamma, \R)$ and the size of $H^1_{{\rm tor}}(\Gamma, \R/\Z)$ is equal to the size of the torsion in $H_1(\Gamma, \Z)\cong \Gamma/ [\Gamma,\Gamma]$. \\

We have a further Eichler--Shimura splitting of the free part due to Harder \cite{Harder75};
\begin{equation}\label{ESH}H^1(\Gamma, \R)\cong H^1_{{\rm cusp}}(\Gamma, \R)\oplus H^1_{{\rm Eis}}(\Gamma, \R),\end{equation}
where $H^1_{{\rm cusp}}(\Gamma, \R)$ is the cuspidal part corresponding to certain automorphic forms for $\Gamma$ (as we will see shortly) and $H^1_{{\rm Eis}}(\Gamma, \R)$ is the (remaining) Eisenstein part, which can be canonically defined. The cuspidal part $H^1_{{\rm cusp}}(\Gamma, \R)$ can be identified with $H^1_{P}(\Gamma, \R)$ where $P$ is the set of all parabolic elements of $\Gamma$ and furthermore all of the above splittings are compatible with the Hecke action, when $\Gamma$ is arithmetic.

There has been a lot of work recently on the study of the size of respectively $H^1_{{\rm cusp}}(\Gamma, \R)$, $H^1_{{\rm Eis}}(\Gamma, \R)$ and $H^1_{{\rm tor}}(\Gamma, \R/\Z)$, and we will now collect the relevant results for our problem. We observe that the image of $\Gamma_\infty'$ in $\Gamma/[\Gamma, \Gamma]$ is either trivial, finite or isomorphic to $\Z$. Thus we conclude that $ H^1_{\Gamma_\infty'}(\Gamma,\R/\Z )$ is non-trivial as soon as, say $ H^1(\Gamma,\R/\Z )$ is not generated by a single element or $H_\mathrm{cusp}^1(\Gamma, \R)$ is non-trivial.

\subsection{The dimension of cohomology groups}
It is a result of Kazhdan \cite{Kazhdan67} that for discrete, cofinite subgroups of real Lie groups of rank larger than $1$, the abelianization is always torsion. In our case, since $\SO(n+1,1)$ is of rank one, we can however hope to see some free part. In the case of cofinite subgroups $\Gamma\subset \SO(n+1,1)$, the dimension of $H^1(\Gamma, \R)$ (or equivalently the free part of $\Gamma/[\Gamma,\Gamma]$) is not very well understood for arbitrary $n$. The best lower bounds of the rank available in the literature seem to be what follows from the work of Millson \cite{Millson76} and Lubotzky \cite{Lubotzky96}, which gives that any arithmetic subgroup $\Gamma $ (with a few restrictions when $n=3,7$) contains a subgroup such that the dimension of $H^1(\Gamma, \R)$ is at least one. In certain arithmetic situations, we will be able to say more using a connection to automorphic forms.

\subsubsection{Cohomology classes associated to automorphic forms}
Recall the splitting (\ref{ESH}) due to Harder of the cohomology into a cuspidal and an Eisenstein part. We give a brief overview of the description of $H^1_{{\rm cusp}}(\Gamma, \R)$ in terms of automorphic forms, as in \cite{Sarnak90}. We recall the canonical isomorphism between $H^1(\Gamma, \R)$ and the de Rham cohomology group $H^1_{\rm{dR}}(\Gamma\backslash \H^{n+1}, \R)$ consisting of 1-forms. Inside $H^1_{\rm{dR}}(\Gamma\backslash \H^{n+1}, \R)$ we define the subset of cuspidal harmonic 1-forms. 
\begin{defi}
\label{cuspidal 1-form}
A harmonic 1-form $\alpha=f_0 dx_0 +f_1 dx_1 + \cdots + f_n dx_n$
on $\GH$ is a {\bf cuspidal 1-form} if
\begin{enumerate}
    \item $\alpha$ is rapidly decreasing at all cusps of $\Gamma$,
    \item for each cusp $\a$ and $y \geq 0$, we have $$\int_{\mathcal{P}_{\a}}f_{\a,i}(x,y) dx = 0 \ , \quad i=0, \dots, n \ ,$$
where $\sa^{*}\alpha=f_{\a,0} dx_0 +  f_{\a,1} dx_1 \cdots  + f_{\a, n}d x_n$.
\end{enumerate}
\end{defi}
We denote by $\mathrm{Har}^1_{\rm{cusp}}(\GH, \R)$ the space of harmonic cuspidal 1-forms on $\Gamma\backslash \H^{n+1}$. Then we have the following identification
\begin{equation*}
    \mathrm{Har}^1_{\rm{cusp}}(\GH, \R) \cong H^1_{\rm{cusp}}(\Gamma, \R),
\end{equation*}
coming from \cite[(2.14)]{Sarnak90}. This reduces the task of lower bounding the dimension of $H^1_{{\rm cusp}}(\Gamma, \R)$ to constructing cuspidal automorphic forms. For congruence subgroups $\Gamma<\mathrm{SV}_{n-1}$, this can be achieved using certain {\it theta lifts} developed by Shintani \cite{Shintani75} of $\GL_2$ holomorphic forms of weight $(n+1)/2+1$ (for details see \cite[page 21]{Sarnak90}). This gives us non-trivial examples for which Theorem \ref{mainthmHn} applies for any $n$. In the low-dimensional cases $n=1,2$ a lot more can be said, as we will see below.\\

Finally let us see explicitly how to construct a unitary characters from cuspidal automorphic forms. We let
\begin{align*}
    \Phi : \Gamma  &\to H_1 (\Gamma, \Z), \quad
    \gamma \mapsto \{\infty, \gamma \infty \} 
\end{align*}
which induces the canonical isomorphism $H_1(\Gamma, \Z) \cong \Gamma/[\Gamma, \Gamma]$. For $\gamma \in \Gamma$ and $\omega\in \mathrm{Har}^1_{\rm{cusp}}(\GH, \R)$, we define the {\it Poincar\'e pairing}
\begin{equation*}
    \inprod{\gamma}{\omega} := 2 \pi i \int_{\Phi(\Gamma)} \omega = 2 \pi i \int_{P}^{\gamma P} \omega \quad \text{for any } P \in \H^{n+1}.
\end{equation*}
We note that that when $n=1$ and $f$ is a classical Hecke cusp form of weight 2 for $\Gamma$, then $f(z)dz$ is indeed a harmonic cuspidal 1-form on $\Gamma\backslash \H^2$ and the Poincar\'e symbol is equal to (minus) the standard modular symbol (\ref{modularsymbol}):
\begin{equation*}
    \inprod{\gamma}{f(z)dz}=2\pi i\int_{\infty}^{a_{\gamma}/c_{\gamma}} f(z) dz = -\inprod{a_\gamma/c_\gamma}{f}.
\end{equation*}
We observe that if $\gamma \in \Gamma$ is parabolic, then $\inprod{\gamma}{\alpha}=0$. Hence if we define $\chi_{\alpha}(\gamma):=e(\inprod{\gamma}{\alpha})$ then $\chi_{\alpha}$ defines a unitary character trivial on $\Gamma_\infty'$. The kernel of the map $\alpha \mapsto \chi_{\alpha}$ is a full rank lattice $L$ inside $\mathrm{Har}^1_{\rm{cusp}}(\GH, \R)$. If we assume that $\Gamma$ is torsion-free, we indeed obtain the identification $H^1_{{\rm free}}(\Gamma, \R/\Z) \cong  \mathrm{Har}^1_{\rm{cusp}}(\GH, \R) / L$.

\subsubsection{The case of $\H^2$}When $n=1$, we have explicit formulas for the dimensions of both the cuspidal and the Eisenstein part. More precisely we have coming from \cite[Prop. 6.2.3]{Wie19} that 
$$H^1_{{\rm cusp}}(\Gamma, \Z)\cong \R^{2g}, H^1_{{\rm Eis}}(\Gamma, \R)\cong \R^{2(h-1)},$$
where $g$ is the genus and $h$ is the number of inequivalent cusps of the Riemann surface $\Gamma\backslash \H^2$. In particular if $\Gamma=\Gamma_0(N)$ is a standard Hecke congruence subgroup, we know that $g\sim \frac{N\cdot\prod_{p|N}(1+p^{-1})}{12}$ and $h=\sum_{d|N}\varphi(d,N/d)$ and we conclude that we can find towers of Hecke congruence subgroups such that both the cuspidal and Eisenstein part goes to infinity.

\subsubsection{The case of $\H^3$}
When $n=2$ there has been a lot of activity recently and we refer to the survey of \c{S}eng\"{u}n \cite{Sengun14} for an excellent and more thorough overview. In this case no formulas are known in general for the ranks of the cuspidal and Eisenstein part and the best one can hope for are lower bounds.

Regarding the Eisenstein part, we can describe it explicitly when $\Gamma$ is torsion-free. In this case, we have that $H^1_{{\rm Eis}}(\Gamma, \R)\cong \R^{h}$, where $h$ is the number of cusps of $\Gamma\backslash \H^3$, see \cite[Proposition 7.5.6]{ElGrMe98}. The same conclusion holds for co-finite subgroups $\Gamma\leq \SL_2(\mathcal{O}_D)$, where $\mathcal{O}_D$ is the ring of integers of the imaginary quadratic field $\Q(\sqrt{D})$ with $D<0$ a fundamental discriminant not equal to $-4,-3$ (in which case there might be torsion in $\Gamma$). In the case of co-finite subgroups $\Gamma\leq \SL_2(\mathcal{O_D})$ with $D=-4,-3$ the picture is much more mysterious, but a lot of numerics are available in \cite{Sengun11} and \cite[Ch. 7.5]{ElGrMe98}. 

For the cuspidal part there are some useful results giving lower bounds on the rank. First of all Rohlfs \cite{rohlfs} showed that 
$$\dim H^1_{{\rm cusp}}(\SL_2(\mathcal{O}_D), \R)\geq  \frac{\varphi(D)}{6}-\frac{1}{2}-h(D), $$
where $h(D)$ denotes the class number of $\Q(\sqrt{D})$. Furthermore \c{S}eng\"{u}n and Turkelli \cite{SeTu16} proved that if $D$ is a fundamental discriminant such that $h(D)=1$, $p$ is a rational prime which is inert in $\Q(\sqrt{D})$ and $\Gamma_0(p^n)\subset \SL_2(\mathcal{O}_D)$ is a congruence subgroup, then we have 
$$ \dim H^1_{{\rm cusp}}(\Gamma_0(p^n), \R)\geq p^{6n},  $$
as $n\rightarrow \infty$ (an upper bound of $p^{10n}$ has been proved by Calegari and Emerton \cite{CaEm09}). In the case of cocompact groups stronger results were obtained by Kionke and Schwermer \cite{KiSc15}. 

\subsection{Torsion in the (co)homology of arithmetic groups}
Now we will discuss what is known about the torsion part of $H_1(\Gamma,\Z)$ when $\Gamma\subset \SO(n+1,1)$ is a cofinite, arithmetic subgroup. In the simplest case $n=1$, we know that all the torsion in the abeliazation comes from the torsion in the subgroup itself and thus in particular $\Gamma/[\Gamma,\Gamma]$ is torsion-free when $\Gamma$ is so.

It was noticed a long time ago in unpublished work by Grunewald and Mennicke that in the case $n=2$ there is a lot of torsion in the abeliazation of congruence subgroups. See  \c{S}eng\"{u}n's work \cite{Sengun11} for some recent extensive computations.

The study of torsion in the abelianization of $\Gamma$ fits into a more general framework of understanding the torsion in the homology of arithmetic groups as in the work of Bergeron and Venkatesh \cite{BergeronVenkatesh13}.  Bergeron and Venkatesh have conjectured that when $\Gamma$ is a congruence subgroup of $\SL_2(\mathcal{O}_{D})$ with $D<0$ a negative fundamental discriminant, then the torsion in $\Gamma/[\Gamma,\Gamma]$ grows exponentially with the index $[\SL_2(\mathcal{O}_{D}):\Gamma]$. 

More generally the conjectures predicts that the torsion in the cohomology of symmetric spaces associated to a semisimple Lie group $G$ will grow exponentially in towers of congruence subgroups exactly if we consider the middle dimensional cohomology and if the {\it fundamental rank} (or {\lq\lq}deficiency{\rq\rq}) $\delta(G):=\rank (G)-\rank (K)$ is $1$ (here $K$ is a maximal compact). It follows from \cite[1.2]{BergeronVenkatesh13} that the fundamental rank of $\SO(n+1,1)$ is equal to $1$ exactly if $n$ is even. And thus we see that we will have exponential growth of the torsion of $\Gamma/[\Gamma,\Gamma]$ when $\Gamma$ runs through a tower of congruence groups exactly when $n=2$ (corresponding to Kleinian groups).

For $n>2$ the torsion should conjecturally {\it not} grow exponentially, but there might still be torsion, which is equally arithmetically interesting in view of \cite{Scholze15}. There seems however to be no experimental or theoretical work available in this case.

\bibliographystyle{plain}

\end{document}